\tikzstyle{vertex}=[circle, draw, inner sep=0pt, minimum size=2pt]
 \newcommand{\vertex}{\node[vertex]}
\newtheorem{theorem}{Theorem}[section]
\newtheorem{lemma}[theorem]{Lemma}
\newtheorem{corollary}[theorem]{Corollary}
\newtheorem{question}[theorem]{Question}
\numberwithin{position}{section}
\definecolor{lightgray}{gray}{0.7}
\definecolor{midgray}{gray}{.9}
\definecolor{cof}{RGB}{219,144,71}
\definecolor{pur}{RGB}{186,146,162}
\definecolor{greeo}{RGB}{91,173,69}
\definecolor{greet}{RGB}{52,111,72}
\newcommand{\shaun}[1]{\textcolor{green}{Shaun: #1}}
\def\ZZ{{\mathbb Z}}
\def\PP{{\mathcal P}}
\def\Cycle{{\mathcal{C} }}
\def\EE{{\mathbb E}}
\title{Tipsy cop and drunken robber: a variant of the\\cop and robber game on graphs}
\author{Pamela E. Harris}
\address[P. E. Harris]{Department of Mathematics and Statistics, Williams College, Williamstown, MA United States}
\email{\textcolor{blue}{\href{mailto:peh2@williams.edu}{peh2@williams.edu}}}
\thanks{P.\,E. Harris was supported by NSF award DMS-1620202.}
\author{Erik Insko}
\address[E. Insko]{Department of Mathematics, Florida Gulf Coast University, United States}
\email{\textcolor{blue}{\href{mailto:einsko@fgcu.edu}{einsko@fgcu.edu}}}
\thanks{}
\author{Alicia Prieto-Langarica}
\address[A. Prieto-Langarica]{Department of Mathematics, Youngstown State University, United States}
\email{\textcolor{blue}{\href{mailto:aprietolangarica@ysu.edu}{aprietolangarica@ysu.edu}}}
\thanks{}
\author{Rade Stoisavljevic}
\address[R. Stoisavljevic]{Department of Mathematics, Florida Gulf Coast University, United States}
\email{\textcolor{blue}{\href{mailto:rstoisavljevic@fgcu.edu}{rstoisavljevic@fgcu.edu}}}
\thanks{}
\author{Shaun Sullivan}
\address[S. Sullivan]{Department of Mathematics, Florida Gulf Coast University, United States}
\email{\textcolor{blue}{\href{mailto:ssullivan@fgcu.edu}{ssullivan@fgcu.edu}}}
\thanks{}
\keywords{Cop and robber game on graphs; pursuit-evasion games on graphs}
\begin{document}
\maketitle

\begin{abstract}
Motivated by a biological scenario %illustrated in the YouTube video \url{ https://www.youtube.com/watch?v=Z_mXDvZQ6dU} 
where a neutrophil chases a bacteria cell moving in random directions, we present a variant of the cop and robber game on graphs called the tipsy cop and drunken robber game. 
In this game, we place a tipsy cop and a drunken robber at different vertices of a finite connected graph $G$. The game consists of independent moves where the robber begins the game by moving to an adjacent vertex from where he began, this is then followed by the cop moving to an adjacent vertex from where she began. Since the robber is inebriated, he takes random walks on the graph, while the cop being tipsy means that her movements are sometimes random and sometimes intentional. 
Our main results give formulas for the probability that the robber is still free from capture after $m$ moves of this game on highly symmetric graphs, such as the complete graphs, complete bipartite graphs, and cycle graphs. We also give the expected encounter time between the cop and robber for these families of graphs. 
We end the manuscript by presenting a general method for computing such probabilities and also detail a variety of directions for future research. 
\end{abstract}

\tableofcontents

\section{Introduction} 
The original game of cops and robbers on graphs was introduced independently by
Quilliot \cite{Q86} and Nowakowski and Winkler \cite{NW83}.   In this
game, both a cop and a robber alternate turns moving from vertex to adjacent vertex on a connected graph $G$ with
the cop trying to capture the robber and the robber trying to elude
the cop.   
This formulation of the game has generated
a great deal of study \cite{1,2,3,4,AF, Gal}.

In 2014, Komarov and Winkler studied a variation of the cop and robber game in which the robber is too drunk to employ an  escape strategy, and
at each step he moves to a neighboring vertex chosen uniformly at random \cite{KW14}.
They noted that on a finite graph $G$ containing $v$ vertices even a stationary or drunk cop will win this game in an expected time at most $4v^3/27$ (plus lower-order terms)  \cite{BW,CTW} and that a greedy cop who merely moves toward the drunk robber at every step can
win in O$(v^2)$; They then noted that a smart
cop, can capture the drunk robber in expected time $v + {\rm o}(v)$ \cite{KW14}.

In this paper, we introduce another variant of the cop and robber game that we call the tipsy cop and drunken robber game on graphs. 
In this game, we place a tipsy cop and a drunken robber at different vertices of a graph $G$. The game consists of independent moves where the robber begins the game by moving to an adjacent vertex from where he began, this is then followed by the cop moving to an adjacent vertex from where she began. We assume that the robber always moves first, and hence the robber's movement takes place on odd numbered moves of the game and the cop on the even numbered moves. Since the robber is inebriated, he follows no set strategy. That is the robber simply moves uniformly and randomly on the graph. However, the cop's movement is divided into a random component and a directed component in the direction of the robber; which is our interpretation of the cop being tipsy. To define this precisely, we let $\theta\in[0,1]$ denote the proportion of the cop's moves that are random and, consequently, let $1-\theta$ be the proportion of the cop's movements that are directed towards the robber. 

One inspiration for the tipsy cop drunken robber game is the biological scenario illustrated in the YouTube video \url{ https://www.youtube.com/watch?v=Z_mXDvZQ6dU} where a neutrophil chases a bacteria cell moving in random directions.  While the bacteria moves randomly, the neutrophil's movement appears clumsy but slightly more purposeful.  Inspired by this video, we tackle the following question in this paper:
\begin{center}\emph{What is the probability that the robber is still free from capture after $m$ moves of this game?}\end{center} 
We show that this probability depends only on the starting position of the cop and robber, and we give recursive (and some closed) formulas for these probabilities for complete graphs (Section \ref{sec:complete}) and complete bipartite graphs (Section \ref{sec:completebipartite}). We also give the expected encounter time between the cop and robber for the complete graph, which is related to Bernoulli trials, and for the complete bipartite graph. In Section \ref{sec:algorithm} we propose a general method that can be used to study this game on any finite connected graph and illustrate its use on cycle graphs (Section \ref{sec:cycle}).  We end the manuscript in Section \ref{sec:open} by presenting many open problems and directions for further study. In particular, we pose a new probability question and illustrate a strategy for solving it on the family of graphs known as the friendship graphs (Section \ref{sec:friendship}).

\section{Complete Graphs}\label{sec:complete}
Throughout every section that follows we let $\theta\in[0,1]$ denote the proportion of the cop's movements that are random. We also let $P_m(G)$ denote the probability that the robber remains free after $m$ moves on the graph $G$. The following result gives the expected encounter (or capture) time in terms of the probabilities $P_m(G)$.

\begin{lemma}\label{lemma}
Let $X$ denote the random variable giving the capture time on the graph $G$. The expected encounter/capture time between the robber and the cop is given by 
\[
\mathbb{E}[X]=\sum_{m=0}^\infty P_m(G).
\]
\end{lemma}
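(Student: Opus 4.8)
The plan is to recognize this as an instance of the classical tail-sum (or ``layer-cake'') formula for the expectation of a nonnegative integer-valued random variable. The crucial observation is that the event ``the robber remains free after $m$ moves'' is precisely the event $\{X > m\}$: capture has not occurred at or before move $m$ exactly when the capture time strictly exceeds $m$. So the first step is to record, directly from the definitions of $P_m(G)$ and of the capture time $X$, the identification
$$P_m(G) = \mathbb{P}(X > m).$$

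Next I would expand $\mathbb{E}[X]$ by writing each value $k$ as a sum of ones. Since $X$ takes values in $\{0, 1, 2, \dots\}$, I would write
$$\mathbb{E}[X] = \sum_{k=0}^\infty k\, \mathbb{P}(X = k) = \sum_{k=1}^\infty \sum_{m=0}^{k-1} \mathbb{P}(X = k),$$
and then interchange the order of summation so as to group terms by $m$ rather than by $k$. This produces
$$\mathbb{E}[X] = \sum_{m=0}^\infty \sum_{k=m+1}^\infty \mathbb{P}(X = k) = \sum_{m=0}^\infty \mathbb{P}(X > m),$$
and combining with the identification from the first step yields the claimed formula $\mathbb{E}[X] = \sum_{m=0}^\infty P_m(G)$.

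The only point requiring care is the interchange of the two infinite sums. Because every summand $\mathbb{P}(X = k)$ is nonnegative, the rearrangement is justified by Tonelli's theorem for double series, so no convergence hypotheses are needed and the equality holds in $[0, \infty]$. In particular, if there is positive probability that the robber is never captured, both sides equal $+\infty$ and the identity remains valid in the extended sense. I expect this interchange to be the only genuine technical obstacle, and it is a routine one; the real content of the lemma is the clean correspondence between ``still free after $m$ moves'' and the tail event $\{X > m\}$.
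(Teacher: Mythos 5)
Your proof is correct, and it reaches the same identity as the paper but by a slightly different mechanism. The paper writes $\mathbb{E}[X]=\sum_{m=0}^\infty m\bigl(P_{m-1}(G)-P_m(G)\bigr)$, using the fact that $\mathbb{P}(X=m)=P_{m-1}(G)-P_m(G)$, and then telescopes; you instead identify $P_m(G)=\mathbb{P}(X>m)$, expand $k=\sum_{m=0}^{k-1}1$, and interchange the double sum by Tonelli. These are two standard derivations of the tail-sum formula, but yours is the more careful one: the paper's telescoping, done honestly on partial sums, leaves a boundary term $N\,P_N(G)$ whose vanishing must be checked (equivalently, one needs $\mathbb{E}[X]<\infty$ or some tail decay), whereas your nonnegativity argument is unconditional and remains valid in $[0,\infty]$ even if the robber escapes forever with positive probability. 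Since the paper later applies the lemma only to graphs where $P_m(G)$ decays geometrically, the gap in its version is harmless, but your route requires no such hypothesis and is the cleaner proof.
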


\begin{proof}
Since the probability that the robber is captured after $m$ moves is $P_{m-1}(G)-P_m(G)$, the expected encounter time is 

\[
\mathbb{E}[X]=\sum_{m=0}^\infty m(P_{m-1}(G)-P_m(G)).
\] The result follows from the fact that the sum is telescopic and reduces to the desired form.
\end{proof}

We now begin our analysis with $K_v$, the complete graph on $v$ vertices, with some examples shown in Figure \ref{completeon4}. 
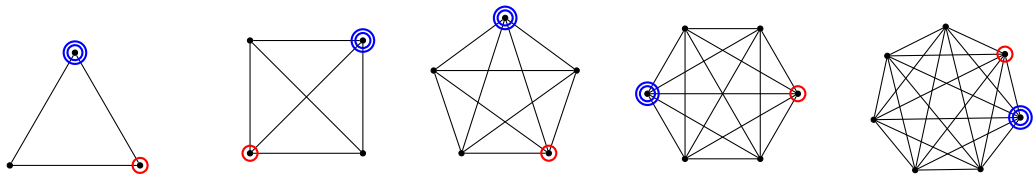
\begin{figure}[h]
\centering
\begin{turn}{-30}
\begin{tikzpicture}
\vertex[fill](a1) at (-.5000000000, .8660254040) {};
\vertex[fill](a2) at (-.5000000000, -.8660254040) {};
\vertex[fill](a3) at (1,0) {};
\draw (a1)--(a2) (a1)--(a3);
\draw (a2)--(a3);
\draw[blue,thick] (-.5,.8660) circle (.1cm);
\draw[blue,thick] (-.5,.8660) circle (.15cm);
\draw[red,thick] (1,0) circle (.1cm);
\end{tikzpicture}%}
\end{turn}
\hspace{5mm}
\begin{tikzpicture}
\vertex[fill](a1) at (.75,.75) {};
\vertex[fill](a2) at (-.75,.75) {};
\vertex[fill](a3) at (-.75,-.75) {};
\vertex[fill](a4) at (.75,-.75) {};
\draw (a1)--(a2) (a1)--(a3) (a1)--(a4);
\draw (a2)--(a3) (a2)--(a4);
\draw (a3)--(a4) ;
\draw[blue,thick] (a1) circle (.1cm);
\draw[blue,thick] (a1) circle (.15cm);
\draw[red,thick] (a3) circle (.1cm);
\end{tikzpicture}
\hspace{5mm}
\begin{tikzpicture}
\vertex[fill](a1) at (0,1) {};
\vertex[fill](a2) at (-.95,.30) {};
\vertex[fill](a3) at (.95,.30) {};
%\vertex[fill](a4) at (.3090169938, -.9510565165) {};
%\vertex[fill](a5) at (1,0) {};
\vertex[fill](a4) at (-.58,-.80) {};
\vertex[fill](a5) at (.58,-.80) {};
\draw (a1)--(a2) (a1)--(a3) (a1)--(a4) (a1)--(a5);
\draw (a2)--(a3) (a2)--(a4) (a2)--(a5);
\draw (a3)--(a4) (a3)--(a5);
\draw (a4)--(a5);
\draw[blue,thick] (a1) circle (.1cm);
\draw[blue,thick] (a1) circle (.15cm);
\draw[red,thick] (a5) circle (.1cm);
\end{tikzpicture}
\hspace{5mm}
\begin{tikzpicture}
\vertex[fill](a1) at (.5000000000, .8660254040) {};
\vertex[fill](a2) at (-.5000000000, .8660254040) {};
\vertex[fill](a3) at (-1., 0.) {};
\vertex[fill](a4) at (-.5000000000, -.8660254040) {};
\vertex[fill](a5) at (.5000000000, -.8660254040) {};
\vertex[fill](a6) at (1., 0.) {};
\draw (a1)--(a2) (a1)--(a3) (a1)--(a4) (a1)--(a5) (a1)--(a6);
\draw (a2)--(a3) (a2)--(a4) (a2)--(a5) (a2)--(a6);
\draw (a3)--(a4) (a3)--(a5) (a3)--(a6) ;
\draw (a4)--(a5) (a4)--(a6);
\draw (a5)--(a6);
\draw[blue,thick] (-1,0) circle (.1cm);
\draw[blue,thick] (-1,0) circle (.15cm);
\draw[red,thick] (1,0) circle (.1cm);
\end{tikzpicture}
\hspace{5mm}
\begin{turn}{-12}
\begin{tikzpicture}
\vertex[fill](a1) at (.6234898018, .7818314825) {};
\vertex[fill](a2) at (-.2225209335, .9749279123) {};
\vertex[fill](a3) at (-.9009688678, .4338837393) {};
\vertex[fill](a4) at (-.9009688678, -.4338837393) {};
\vertex[fill](a5) at (-.2225209335, -.9749279123) {};
\vertex[fill](a6) at (.6234898018, -.7818314825) {};
\vertex[fill](a7) at (1., 0.) {};
\draw (a1)--(a2) (a1)--(a3) (a1)--(a4) (a1)--(a5) (a1)--(a6) (a1)--(a7);
\draw (a2)--(a3) (a2)--(a4) (a2)--(a5) (a2)--(a6) (a2)--(a7);
\draw (a3)--(a4) (a3)--(a5) (a3)--(a6) (a3)--(a7);
\draw (a4)--(a5) (a4)--(a6) (a4)--(a7);
\draw (a5)--(a6) (a5)--(a7);
\draw (a6)--(a7);
\draw[blue,thick] (1,0) circle (.1cm);
\draw[blue,thick] (1,0) circle (.15cm);
\draw[red,thick] (.6234,.7818) circle (.1cm);
\end{tikzpicture}
\end{turn}
\captionof{figure}{Examples of complete graphs ($K_v$ for $3\leq v\leq 7$) with cop (blue double circle) and robber (red single circle).}\label{completeon4}
\end{figure}

\begin{theorem}\label{thm:complete}If $0\leq \theta\leq 1$ denotes the proportion of the cop's movement that is random, then the probability that the robber remains free after $m$ moves on $K_v$ is given by
\begin{align}
P_m(K_v)&=\theta^{\lfloor\frac{m}{2}\rfloor}\left(\frac{v-2}{v-1}\right)^m.\label{complete}
\end{align}
\end{theorem}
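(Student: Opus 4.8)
The plan is to prove the formula by induction on the number of moves $m$, exploiting the fact that on a complete graph every ordered pair of distinct vertices looks the same, so that the probability the robber survives a single move, conditioned on his being free just before it, depends only on whether it is the robber's turn or the cop's turn --- never on the specific vertices occupied or on the prior history. Concretely, I would strengthen the inductive hypothesis to record not only the value of $P_m(K_v)$ but also the (automatic) structural fact that conditioned on survival through $m$ moves the cop and robber sit at two distinct vertices. Since $K_v$ is vertex-transitive and, more to the point, all ordered pairs of distinct vertices are related by an automorphism, this is all the state information the next transition can depend on.

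I would then compute the two per-move survival factors. On an odd move it is the robber's turn: being free, he sits at a vertex distinct from the cop and moves uniformly to one of the other $v-1$ vertices; exactly one of these is the cop's vertex, so he is captured with probability $1/(v-1)$ and survives with probability $\frac{v-2}{v-1}$. On an even move it is the cop's turn: with probability $1-\theta$ she moves intentionally, and because every pair of vertices is adjacent in $K_v$ she steps directly onto the robber and captures with certainty; with probability $\theta$ she moves uniformly at random and, exactly as the robber does, lands on him with probability $1/(v-1)$. Hence the conditional survival probability of a cop move is $\theta\cdot\frac{v-2}{v-1}$. In either case, conditioned on survival the two players again occupy distinct vertices, which closes the structural part of the induction.

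The base case $m=0$ is immediate since the players start apart, giving $P_0(K_v)=1$. For the inductive step I would write $P_m(K_v)=P_{m-1}(K_v)\cdot s_m$, where $s_m=\frac{v-2}{v-1}$ if $m$ is odd and $s_m=\theta\cdot\frac{v-2}{v-1}$ if $m$ is even. Unwinding the recursion, after $m$ moves there have been $\lceil m/2\rceil$ robber moves and $\lfloor m/2\rfloor$ cop moves, so
\[
P_m(K_v)=\left(\frac{v-2}{v-1}\right)^{\lceil m/2\rceil}\left(\theta\cdot\frac{v-2}{v-1}\right)^{\lfloor m/2\rfloor}=\theta^{\lfloor m/2\rfloor}\left(\frac{v-2}{v-1}\right)^{m},
\]
using $\lceil m/2\rceil+\lfloor m/2\rfloor=m$, which is the claimed identity.

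The only genuinely delicate point is the independence and factorization across moves: I expect the main obstacle to be arguing cleanly that the per-move survival probability is a constant independent of the history, rather than something that depends on the evolving distribution of the configuration. The observation that makes this painless on $K_v$ --- and which I would emphasize --- is that ``the robber is free'' already forces the configuration into the single automorphism class of ordered pairs of distinct vertices, so there is no residual distributional information left for the next transition to see, and the process reduces to a sequence of independent Bernoulli-type trials. This is precisely the structure that Lemma~\ref{lemma} converts into the expected capture time, and it is also the feature that fails on less symmetric graphs, which is why the later sections must rely on recursions rather than a clean product.
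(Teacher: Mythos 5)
Your proposal is correct and follows essentially the same route as the paper: both identify the per-move conditional survival factors $\tfrac{v-2}{v-1}$ for a robber move and $\theta\cdot\tfrac{v-2}{v-1}$ for a cop move and multiply them across the $\lceil m/2\rceil$ robber moves and $\lfloor m/2\rfloor$ cop moves. Your write-up is somewhat more careful than the paper's (which simply asserts that ``the game continues in this fashion''), since you make explicit the symmetry argument showing the factors are history-independent, but the underlying argument is the same.
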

\begin{proof}
Observe that on the complete graph $K_v$ the cop and robber are at all times one vertex away from each other. 
Recall that the robber moves randomly and makes the first move within the graph $K_v$. For the first move, the probability of the robber moving and not finding the cop is given by \[P_1(K_v)=\frac{v-2}{v-1}\] as the robber cannot stay put and cannot move to the location of the cop and then the game would end. If the game continues past the first move, it is the cop's turn to move. The cop moves strategically (with probability $1-\theta$) or randomly (with probability $\theta$). As she is just one step away from the robber, if she were to move strategically, she will catch the robber. If the cop moves randomly with probability $\theta$ and does not catch the robber (with probability $\frac{v-2}{v-1}$), the probability of the game continuing past the second move  is given by \[P_2(K_v)=\theta\left(\frac{v-2}{v-1}\right)^2.\] The game continues in this fashion and hence the probability that the robber remains free after $m$ moves on $K_v$ is given by the following 

\[P_m(K_v)=\theta^{\lfloor\frac{m}{2}\rfloor}\left(\frac{v-2}{v-1}\right)^m,\]
where we note that the exponent of $\theta$ is given by $\lfloor\frac{m}{2}\rfloor$ since this factor only appears when the cop moves, and she moves second in the game.
\end{proof}

Observe that when $\theta=1$ the cop is drunk and moves randomly. In this case Equation \eqref{complete} reduces to
\[P_m(K_v)=\left(\frac{v-2}{v-1}\right)^m.\]
\begin{theorem}
Let $X$ denote the random variable giving the capture time on the complete graph $K_v$.  If $\theta$ denotes the proportion of the cop's movement that is random, then the expected encounter time between the robber and cop is given by
\[\mathbb{E}[X]=\frac{(2v-3)(v-1)}{(v-1)^2-\theta(v-2)^2}.\]
\end{theorem}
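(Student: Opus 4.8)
The plan is to combine the two results already established: Lemma~\ref{lemma} expresses the expected capture time as $\mathbb{E}[X]=\sum_{m=0}^\infty P_m(K_v)$, and Theorem~\ref{thm:complete} gives the explicit form $P_m(K_v)=\theta^{\lfloor m/2\rfloor}\bigl(\tfrac{v-2}{v-1}\bigr)^m$. So the entire argument reduces to evaluating a single series. To keep the bookkeeping clean I would abbreviate $r=\tfrac{v-2}{v-1}$ and write
\[
\mathbb{E}[X]=\sum_{m=0}^\infty \theta^{\lfloor m/2\rfloor} r^{m}.
\]

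The key manoeuvre is to resolve the floor function by splitting the sum according to the parity of $m$. Writing $m=2k$ gives $\lfloor m/2\rfloor=k$ and a term $\theta^{k}r^{2k}$, while $m=2k+1$ also gives $\lfloor m/2\rfloor=k$ and a term $\theta^{k}r^{2k+1}$. Grouping the two families, I would factor out the common $(\theta r^2)^k$ and obtain
\[
\mathbb{E}[X]=\sum_{k=0}^\infty (\theta r^2)^k+\sum_{k=0}^\infty r\,(\theta r^2)^k=(1+r)\sum_{k=0}^\infty (\theta r^2)^k.
\]
This is now a single geometric series. Before summing it I would note that it converges: since $0\le\theta\le 1$ and $0\le r=\tfrac{v-2}{v-1}<1$ for every $v\ge 2$, we have $\theta r^2<1$ strictly, so the series sums to $\tfrac{1}{1-\theta r^2}$ and yields $\mathbb{E}[X]=\tfrac{1+r}{1-\theta r^2}$.

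The remaining step is purely algebraic substitution of $r=\tfrac{v-2}{v-1}$. Here $1+r=\tfrac{2v-3}{v-1}$ and $1-\theta r^2=\tfrac{(v-1)^2-\theta(v-2)^2}{(v-1)^2}$, so clearing denominators produces exactly $\mathbb{E}[X]=\tfrac{(2v-3)(v-1)}{(v-1)^2-\theta(v-2)^2}$. I do not anticipate a genuine obstacle in this proof; the only point requiring a moment of care is the parity split that eliminates the floor function, together with the brief convergence check, since that is where the geometric structure of the series is actually used. The rest is routine simplification.
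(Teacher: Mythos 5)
Your proposal is correct and follows essentially the same route as the paper's own proof: apply Lemma~\ref{lemma} and Theorem~\ref{thm:complete}, split the series by the parity of $m$ to remove the floor, sum the resulting geometric series, and simplify. The only (welcome) addition is your explicit convergence check $\theta r^2<1$, which the paper leaves implicit.
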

\begin{proof}
By Theorem \ref{thm:complete} and Lemma \ref{lemma}, we have
\begin{align*}
\mathbb{E}[X]&=\sum_{m=0}^\infty P_m(K_v)\\
&=\sum_{m=0}^\infty \theta^{\lfloor\frac{m}{2}\rfloor}\left(\frac{v-2}{v-1}\right)^m\\
&=\sum_{m=0}^\infty \theta^{\lfloor\frac{2m}{2}\rfloor}\left(\frac{v-2}{v-1}\right)^{2m} +\sum_{m=0}^\infty \theta^{\lfloor\frac{2m+1}{2}\rfloor}\left(\frac{v-2}{v-1}\right)^{2m+1}\\    
&=\sum_{m=0}^\infty \left(\theta\left(\frac{v-2}{v-1}\right)^2\right)^{m} +\left(\frac{v-2}{v-1}\right)\sum_{m=0}^\infty \left(\theta\left(\frac{v-2}{v-1}\right)^2\right)^{m}.
\end{align*}
By the geometric sum formula we find that
\[\mathbb{E}[X]=\frac{1}{1-\theta(\frac{v-2}{v-1})^2}\left(1+\left(\frac{v-2}{v-1}\right)\right)=\frac{(2v-3)(v-1)}{(v-1)^2-\theta(v-2)^2}.\qedhere\]
\end{proof}

Below we present an analogous result in the case where a completed round consists of a movement by the robber and a movement by the cop.
\begin{theorem}
Let $X$ denote the random variable giving the capture time in number of rounds (where a round consists of one robber move and one cop move) on the complete graph $K_v$.   If $\theta$ denotes the proportion of the cop's movement that is random, then the expected encounter time between the robber and the cop is given by 
\[ \EE[X] = \frac{1}{ \frac{1}{v-1} + \frac{v-2}{v-1}\left( \left ( \frac{1}{v-1} \right )\theta +1 - \theta \right ) }=\frac{(v-1)^2}{(v-1)^2-\theta(v-2)^2}.\]
\end{theorem}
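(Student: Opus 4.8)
The plan is to exploit the fact that on the complete graph the cop and robber are always exactly one vertex apart, so that at the start of every round the game configuration is identical. Consequently the rounds form a sequence of independent, identically distributed Bernoulli trials, and the number of rounds $X$ until capture is a geometric random variable. If $p$ denotes the probability that capture occurs during a single round, then $\EE[X]=1/p$, so the entire problem reduces to computing $p$.

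To find $p$ I would condition on the two sub-moves that make up a round. The round begins with the robber's move: since he moves uniformly at random among the $v-1$ vertices other than his current one, and exactly one of these is the cop's vertex, he walks into the cop with probability $\frac{1}{v-1}$ and survives with probability $\frac{v-2}{v-1}$. If he survives, the cop (still one step from the robber, as argued in the proof of Theorem~\ref{thm:complete}) moves next: with probability $1-\theta$ she moves strategically and captures with certainty, while with probability $\theta$ she moves randomly and lands on the robber with probability $\frac{1}{v-1}$. Hence the cop captures during her move with probability $(1-\theta)+\frac{\theta}{v-1}$. Combining the two sub-moves gives
\[
p=\frac{1}{v-1}+\frac{v-2}{v-1}\left((1-\theta)+\frac{\theta}{v-1}\right),
\]
which is exactly the denominator of the middle expression in the statement, so that $\EE[X]=1/p$ matches that expression.

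The remaining step is the routine simplification of $\EE[X]=1/p$ into the closed form $\frac{(v-1)^2}{(v-1)^2-\theta(v-2)^2}$: clearing denominators in $p$ and collecting the terms involving $\theta$ turns the reciprocal into $(v-1)^2$ over $(v-1)^2-\theta(v-2)^2$. As an independent check I would note that the probability of surviving $k$ complete rounds equals $P_{2k}(K_v)=\bigl(\theta(\tfrac{v-2}{v-1})^2\bigr)^{k}$ by Theorem~\ref{thm:complete}, so the round-based analogue of Lemma~\ref{lemma} yields $\EE[X]=\sum_{k=0}^\infty P_{2k}(K_v)$, a geometric series that sums to the same value.

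The calculation itself is elementary; the only point requiring care is the justification that each round is a genuine independent Bernoulli trial with a fixed success probability. This is exactly where the complete-graph structure is used, since the constant distance $1$ between the players guarantees that the configuration resets at the start of every round. The second subtlety is the bookkeeping that capture within a round can occur either on the robber's move or on the cop's move, which is what produces the two summands in the formula for $p$.
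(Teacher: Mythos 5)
Your proposal is correct and follows essentially the same route as the paper: treat each round as a Bernoulli trial with success probability $p=\frac{1}{v-1}+\frac{v-2}{v-1}\left(\frac{\theta}{v-1}+1-\theta\right)$, obtained by conditioning on the robber's move and then the cop's (random versus strategic) move, and conclude $\EE[X]=1/p$. Your added cross-check via the geometric series $\sum_{k\ge 0}P_{2k}(K_v)$ is a nice bonus but not part of the paper's argument.
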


\begin{proof}
If the robber and cop are walking on a complete graph with $v$ nodes, at any given time the cop is only one step away from the robber. The probability of an encounter is given by the sum of three conditional probabilities. The first conditional probability is the probability that the robber randomly chooses the vertex occupied by the cop, which is given by $\frac{1}{v-1}$. 
The second conditional probability is the probability that the robber avoids the cop, and then the cop moves randomly and lands on the robber's vertex, which is $\left ( \frac{v-2}{v-1} \right )\theta\left (\frac{1}{v-1}\right)$. 
The final conditional probability is the probability that the robber avoids the cop, then the cop moves strategically to the vertex where the robber is located.  This probability is $ \frac{v-2}{v-1} (1- \theta)$. Therefore, at any given time the probability of the cop catching the robber can be seen as a Bernoulli trial  with probability of success equal to $P=\frac{1}{v-1} + \frac{v-2}{v-1}\left( \frac{1}{v-1}\theta +1 - \theta \right )$. Knowing this, it can be concluded that the expected time it would take the cop to catch the robber is given by the reciprocal of~$P$.
\end{proof}

\section{Complete Bipartite Graphs}\label{sec:completebipartite}
We now consider the tipsy cop and drunken robber game on complete bipartite graphs. We let $K_{v,w}$ denote the complete bipartite graph with $v, w$ denoting the size of each of the two disjoint and independent sets of vertices satisfying that every edge connects a vertex in one vertex set to the other. 
There are four distinct starting positions for the cop and robber (up to isomorphism) as depicted on $K_{3,2}$ in positions \ref{set1}-\ref{set4} below. 
\setcounter{position}{0}
\makeatletter 
\renewcommand{\theposition}{\@arabic\c@position}
\makeatother

\vspace{12pt}

\begin{minipage}{\textwidth}
\begin{minipage}[b]{0.2\textwidth}
\centering
\resizebox{.5\textwidth}{!}{
\begin{tikzpicture}
\vertex[fill](a1) at (0,1) {};
\vertex[fill](a2) at (0,2) {};
\vertex[fill](a3) at (0,3) {};
\vertex[fill](b1) at (2,1.5) {};
\vertex[fill](b2) at (2,2.5) {};
\draw (a1)--(b1) (a1)--(b2);
\draw (a2)--(b1) (a2)--(b2);
\draw (a3)--(b1) (a3)--(b2);
\draw[blue,thick] (0,3) circle (.1cm);
\draw[blue,thick] (0,3) circle (.15cm);
\draw[red,thick] (2,2.5) circle (.1cm);
\end{tikzpicture}}
\captionof{position}{}\label{set1}
\end{minipage}
\hspace{.5cm}
\begin{minipage}[b]{0.2\textwidth}
\centering
\resizebox{.5\textwidth}{!}{
\begin{tikzpicture}
\vertex[fill](a1) at (0,1) {};
\vertex[fill](a2) at (0,2) {};
\vertex[fill](a3) at (0,3) {};
\vertex[fill](b1) at (2,1.5) {};
\vertex[fill](b2) at (2,2.5) {};
\draw (a1)--(b1) (a1)--(b2);
\draw (a2)--(b1) (a2)--(b2);
\draw (a3)--(b1) (a3)--(b2);
\draw[blue,thick] (0,3) circle (.1cm);
\draw[blue,thick] (0,3) circle (.15cm);
\draw[red,thick] (0,2) circle (.1cm);
\end{tikzpicture}}
\captionof{position}{}\label{set2}
\end{minipage}
\hspace{.5cm}
\begin{minipage}[b]{0.2\textwidth}
\centering
\resizebox{.5\textwidth}{!}{
\begin{tikzpicture}
\vertex[fill](a1) at (0,1) {};
\vertex[fill](a2) at (0,2) {};
\vertex[fill](a3) at (0,3) {};
\vertex[fill](b1) at (2,1.5) {};
\vertex[fill](b2) at (2,2.5) {};
\draw (a1)--(b1) (a1)--(b2);
\draw (a2)--(b1) (a2)--(b2);
\draw (a3)--(b1) (a3)--(b2);
\draw[red,thick] (0,3) circle (.1cm);
\draw[blue,thick] (2,2.5) circle (.1cm);
\draw[blue,thick] (2,2.5) circle (.15cm);
\end{tikzpicture}}
\captionof{position}{}\label{set3}
\end{minipage}
\hspace{.5cm}
\begin{minipage}[b]{0.2\textwidth}
\centering
\resizebox{.5\textwidth}{!}{
\begin{tikzpicture}
\vertex[fill](a1) at (0,1) {};
\vertex[fill](a2) at (0,2) {};
\vertex[fill](a3) at (0,3) {};
\vertex[fill](b1) at (2,1.5) {};
\vertex[fill](b2) at (2,2.5) {};
\draw (a1)--(b1) (a1)--(b2);
\draw (a2)--(b1) (a2)--(b2);
\draw (a3)--(b1) (a3)--(b2);
\draw[blue,thick] (2,2.5) circle (.1cm);
\draw[blue,thick] (2,2.5) circle (.15cm);
\draw[red,thick] (2,1.5) circle (.1cm);
\end{tikzpicture}}
\captionof{position}{}\label{set4}
\end{minipage}
\end{minipage}\\

\noindent We let $P_m^i(K_{v,w})$ with $1 \leq i \leq 4$ denote the probability that the robber is still free after $m$ moves when the cop and robber begin the game located at position $i$ on graph $K_{v,w}$.\\

\begin{theorem}If $\theta$ denotes the proportion of the cop's movement that is random, then the probability that the robber remains free after $m$ moves on $K_{v,w}$ is given by the following closed formulas
\begin{align}
P_m^1(K_{v,w})&=\left(\frac{v-1}{v}\right)^{\lfloor\frac{m+3}{4}\rfloor}\left(\frac{w-1}{w}\right)^{\lfloor\frac{m+1}{4}\rfloor}\label{formulaP1}\\
P_m^2(K_{v,w})&=\theta^{\lfloor\frac{m}{2}\rfloor}\left(\frac{v-1}{v}\right)^{\lfloor\frac{m}{4}\rfloor}\left(\frac{w-1}{w}\right)^{\lfloor\frac{m+2}{4}\rfloor}\label{formulaP2}\\
P_m^3(K_{v,w})&=\left(\frac{v-1}{v}\right)^{\lfloor\frac{m+1}{4}\rfloor}\left(\frac{w-1}{w}\right)^{\lfloor\frac{m+3}{4}\rfloor}\label{formulaP3}\\
P_m^4(K_{v,w})&=\theta^{\lfloor\frac{m}{2}\rfloor}\left(\frac{v-1}{v}\right)^{\lfloor\frac{m+2}{4}\rfloor}\left(\frac{w-1}{w}\right)^{\lfloor\frac{m}{4}\rfloor}\label{formulaP4}.
\end{align}
\end{theorem}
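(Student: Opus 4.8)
The plan is to reduce the whole analysis to tracking only the \emph{parts} occupied by the two players. In $K_{v,w}$ every edge joins the two sides, so each move necessarily switches a player from one side to the other; moreover, by the symmetry of the complete bipartite graph, every vertex within a given side is interchangeable, so the only data relevant to capture is which side the cop occupies, which side the robber occupies, and (equivalently) whether they are on the same side or on opposite sides. First I would record the reachable configurations as a deterministic period-$4$ state machine: writing $(c,r)$ for the sides of the cop and robber, position~\ref{set1} follows the cycle $(V,W)\to(V,V)\to(W,V)\to(W,W)\to(V,W)$, where the robber's (odd) moves and the cop's (even) moves alternate, and position~\ref{set2} follows the cycle $(V,V)\to(V,W)\to(W,W)\to(W,V)\to(V,V)$. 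Thus the survival probability after $m$ moves is a product of per-move \emph{survival factors}, and the factor contributed by move $k$ depends only on $k \bmod 4$ together with the starting position.

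Next I would compute these per-move factors, which is where the structure of the game enters. Capture on a given move is possible only when the mover moves \emph{into} the side already occupied by the other player. On an odd (robber) move this happens exactly when the cop sits on the side the robber is entering; since the robber moves uniformly, the survival factor is $\frac{v-1}{v}$ or $\frac{w-1}{w}$ according to the target side, with no $\theta$. On an even (cop) move, capture is possible only when the robber sits on the side the cop is entering, i.e.\ only when the two were on opposite sides just before her move; a strategic move (probability $1-\theta$) then lands on the robber's vertex and captures, while a random move (probability $\theta$) misses with probability $\frac{v-1}{v}$ or $\frac{w-1}{w}$, giving survival factor $\theta\,\frac{v-1}{v}$ or $\theta\,\frac{w-1}{w}$. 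The key qualitative point, read off from the two cycles above, is that for the opposite-side starts (positions~\ref{set1} and~\ref{set3}) every capture opportunity falls on a robber move and every cop move is ``wasted'' (she is two steps away and can only close to distance one), so $\theta$ never appears; whereas for the same-side starts (positions~\ref{set2} and~\ref{set4}) every capture opportunity falls on a cop move, so a factor of $\theta$ appears on each even move.

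Finally I would assemble the product and match it to the stated floor formulas. For each starting position I would identify which residue class mod $4$ carries each of the factors $\tfrac{v-1}{v}$, $\tfrac{w-1}{w}$ (and note that $\theta$ rides on every even move in positions~\ref{set2},~\ref{set4}), then count how many indices $k\in\{1,\dots,m\}$ lie in each class. For position~\ref{set1}, for instance, $\frac{v-1}{v}$ occurs precisely for $k\equiv 1\pmod 4$ and $\frac{w-1}{w}$ for $k\equiv 3\pmod 4$, and the required identities are $\#\{1\le k\le m:\ k\equiv 1\}=\lfloor\tfrac{m+3}{4}\rfloor$ and $\#\{k\equiv 3\}=\lfloor\tfrac{m+1}{4}\rfloor$, with the analogous counts $\lfloor\tfrac{m+2}{4}\rfloor$, $\lfloor\tfrac m4\rfloor$, and $\lfloor\tfrac m2\rfloor$ handling the remaining exponents for position~\ref{set2}. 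These counting identities are elementary (checking the four residues of $m$ mod $4$, or inducting in blocks of four moves), and they reproduce \eqref{formulaP1} and \eqref{formulaP2} exactly. Positions~\ref{set3} and~\ref{set4} then follow immediately from~\ref{set1} and~\ref{set2} under the graph symmetry interchanging the two sides, which swaps $v\leftrightarrow w$ and yields \eqref{formulaP3} and \eqref{formulaP4}. I expect the main obstacle to be purely bookkeeping rather than conceptual: getting the capture rule exactly right on cop moves (recognizing that she can capture only when the two are on opposite sides, which is what removes $\theta$ from positions~\ref{set1} and~\ref{set3}) and then aligning the residue classes with the precise offsets $+1,+2,+3$ appearing inside the floor functions.
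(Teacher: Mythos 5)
Your proposal is correct and follows essentially the same route as the paper: both arguments exploit the period-$4$ alternation of sides, observe that capture opportunities fall only on robber moves for opposite-side starts (no $\theta$) and only on cop moves for same-side starts (a $\theta$ per even move), and then count residue classes mod $4$ to obtain the floor exponents, with positions \ref{set3} and \ref{set4} following by the $v\leftrightarrow w$ symmetry. Your write-up is somewhat more explicit about the state machine and the counting identities, but the decomposition and key observations are identical to the paper's.
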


 \begin{proof}

 Let $V$ and $W$ denote the two parts of the bipartite graph $K_{v,w}$ with $|V|=v$ and $|W|=w$.
 We break the proof up into two cases: 
 \begin{enumerate} 
  \item  when the cop and robber start in different parts (positions 1 and 3) , 
  and 
  \item  when the cop and robber start in the same part (positions 2 and 4).
 \end{enumerate}

We first prove case (1) when the robber and cop start in different parts. We note that in this case, the cop cannot capture the robber on her turn because they are both in the same part. 
 Thus the only time an encounter occurs is when the robber 
 moves to a vertex the cop is occupying.

 Assume the robber is located in part $W$, 
 and the cop is in part $V$ as depicted in position 1 above.    
 On turns $k \equiv 1 \mod 4$ the probability of the robber moving to a vertex the cop is not occupying is 
 $ \frac{v-1}{v}$.   Similarly, on turns $k \equiv 3 \mod 4$ the probability of the robber moving to a vertex the cop is not occupying is 
 $ \frac{w-1}{w}$.  
 As noted above, on turns $k \equiv 0 \mod 2$ the probability that the cop avoids the robber is $1$.
Thus to calculate the probability that the game continues after $m$ turns, 
one only needs to compute the number of rounds that are 
congruent to $1$ and $3 \mod 4$ as given in Equation \eqref{formulaP1}. 
Switching the starting positions of the cop and robber to those in position 3 above, 
the same argument gives the formula in Equation \eqref{formulaP3}.

Next we consider case (2).  Assume that the cop and robber begin the game in part $V$ as depicted in position 2 above. 
In this case, the robber can never be caught on his turn; so the probability of the game continuing on turns $k\equiv 1 \text{or } 3 \mod 4$ is $1$.
On turns $k \equiv 2 \mod 4$ the cop is moving from part $V$ to part $W$.  
So the probability of the cop moving to a vertex the robber is not on is $\theta \left (  \frac{w-1}{w} \right ) $.
Similarly, on turns $k \equiv 0 \mod 4$ the cop moves from part $W$ to part $v$, 
and the probability of the cop moving to a vertex the robber is not on is $\theta \left ( \frac{v-1}{v} \right ) $.
Thus to calculate the probability that the game continues after $m$ turns, 
one only needs to compute the number of rounds that are 
congruent to $0$ and $2 \mod 4$ as given in Equation~\eqref{formulaP2}. Switching the starting positions of the cop and robber to position 4 depicted above, the same argument gives the formula in 
Equation \eqref{formulaP4}.
\end{proof}

\begin{corollary}\label{cor: move together}
Let $X$ denote the random variable giving the capture time on $K_{v,w}$ and let $\EE^i[X]$ denote the expected encounter time between the robber and cop beginning the game in position $i$ on $K_{v,w}$. If $\theta$ denotes the proportion of the cop's movement that is random, then
\begin{align}
    \EE^1[X]= &  \frac{4vw -3w -v +1 }{ v+w-1}\label{exp1}\\
    \EE^2[X]=&\frac{2 ( vw - \theta v(w-1) )}{ vw - \theta^2 (v-1)(w-1) }\label{exp2}\\
    \EE^3[X]=&  \frac{4vw -3v -w +1 }{ v+w-1}\label{exp3}\\
    \EE^4[X]=&\frac{2 ( vw - \theta w(v-1) )}{ vw - \theta^2 (v-1)(w-1) }.\label{exp4}
\end{align}
\end{corollary}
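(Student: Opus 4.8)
The plan is to apply Lemma~\ref{lemma}, which reduces each expected encounter time to the series $\EE^i[X]=\sum_{m=0}^\infty P_m^i(K_{v,w})$, and then to evaluate these four series using the closed formulas \eqref{formulaP1}--\eqref{formulaP4}. Writing $a=\frac{v-1}{v}$ and $b=\frac{w-1}{w}$, the key observation is that in each $P_m^i$ the exponents inside the floor functions advance by exactly one unit over every window of four consecutive values of $m$; consequently the summand is \emph{quasi-periodic} with period $4$, in the sense that $P_{m+4}^i=r\,P_m^i$ for a fixed ratio $r$ depending only on the case. I would therefore split each sum into consecutive blocks $\{4k,4k+1,4k+2,4k+3\}$, evaluate the four entries of the initial block $k=0$ explicitly, and recognize the remaining sum over $k$ as a geometric series with ratio $r$.

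For positions $1$ and $3$ the cop never captures on her turn, so no factor of $\theta$ appears and the block ratio is $r=ab=\frac{(v-1)(w-1)}{vw}<1$, ensuring convergence. In position $1$ the entries of the initial block are, from \eqref{formulaP1}, $P_0^1=1$, $P_1^1=P_2^1=a$, and $P_3^1=ab$, so the block sum is $1+2a+ab$; summing $(1+2a+ab)\sum_{k\ge 0}(ab)^k=\frac{1+2a+ab}{1-ab}$ and clearing the denominators of $vw$ yields \eqref{exp1}. The substitution $v\leftrightarrow w$ (equivalently, reading off \eqref{formulaP3}) then gives \eqref{exp3}.

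For positions $2$ and $4$ the robber is never captured on his turn, but each of the cop's two moves per period contributes a factor of $\theta$, so the block ratio becomes $r=\theta^2 ab<1$. In position $2$ the initial block reads $P_0^2=P_1^2=1$ and $P_2^2=P_3^2=\theta b$ by \eqref{formulaP2}, giving block sum $2+2\theta b$ and hence $\frac{2+2\theta b}{1-\theta^2 ab}$; simplifying this rational expression produces \eqref{exp2}, and the $v\leftrightarrow w$ symmetry (via \eqref{formulaP4}) gives \eqref{exp4}.

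I expect the only genuine obstacle to be bookkeeping rather than anything conceptual: one must verify that the period-$4$ block structure is precisely what the floor functions in \eqref{formulaP1}--\eqref{formulaP4} encode, namely that each exponent increases by exactly one per block and that the four initial entries are as listed, since an off-by-one in any floor would corrupt both the ratio $r$ and the initial block sum. Once that is confirmed, convergence is immediate from $a,b<1$, and the remaining task is the routine algebraic reduction of $\frac{S}{1-r}$ to the stated rational functions of $v$, $w$, and $\theta$.
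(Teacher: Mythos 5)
Your proposal is correct and takes essentially the same approach as the paper: apply Lemma \ref{lemma}, exploit the period-four structure of the closed formulas \eqref{formulaP1}--\eqref{formulaP4} to reduce each series to a geometric sum (the paper groups the terms as $2\sum_i (xy)^i + 2x\sum_i (xy)^i$ rather than by blocks of four, which is the same computation), and obtain \eqref{exp3} and \eqref{exp4} from \eqref{exp1} and \eqref{exp2} by the $v\leftrightarrow w$ symmetry. One caveat: your (correct) block sum $\frac{2+2\theta b}{1-\theta^2 ab}$ for position 2 simplifies to $\frac{2(vw+\theta v(w-1))}{vw-\theta^2(v-1)(w-1)}$, with a \emph{plus} sign in the numerator, so it does not literally yield \eqref{exp2} as printed; since the check $\theta=1$, $v=w=2$ gives $\EE^2[X]=4$ (consistent with the plus sign, whereas the printed formula gives $4/3$, which is impossible because $P_0^2+P_1^2=2$ already), the minus signs in \eqref{exp2} and \eqref{exp4} are typos in the statement rather than flaws in your argument.
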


\begin{proof}We have shown that on $K_{v,w}$ there are two distinct positions we must consider solely based on whether the cop and robber are on the same side of $K_{v,w}$ or on opposite sides at the start of the game. Hence we prove the formulas for the expected value when the game begins at positions \ref{set1} and \ref{set2} and note that the formulas for the expected value when the game begins at position \ref{set3} and \ref{set4} are obtained by simply interchanging $v$ and $w$ in the formulas for positions \ref{set1} and \ref{set2}, respectively.\\

\noindent
{\bf Case 1:} The game begins with the cop and robber at position \ref{set1}. Letting $x=\frac{v-1}{v}$ and $y=\frac{w-1}{w}$, note
\begin{align}\nonumber
\EE^{1}[X] & = \sum_{m=0}^\infty P_m^1(K_{v,w}) \\\nonumber             & = 1+ x+ x+ xy+xy+x^2y+x^2y + x^2y^2+ \cdots \\
           \nonumber
           & = 1 + 2x + 2xy + 2x^2y +2x^2y^2 + \cdots \\
           \nonumber
           & = -1 + 2 \sum_{i=0}^\infty (xy)^i + 2x \sum_{i=0}^\infty (xy)^i \\
           \nonumber
           & = \frac{2(1+x)}{1-xy} -1 \\
           & = \frac{1+2x+xy}{1-xy}.
\label{tosimplify1}
\end{align}

Substituting for $x=\frac{v-1}{v}$ and $y=\frac{w-1}{w}$ back in Equation \eqref{tosimplify1} yields 
\begin{align*} \EE^1[X] & = \frac{1+2 \left ( \frac{v-1}{v} \right ) + \left ( \frac{v-1}{v} \right) \left ( \frac{w-1}{w} \right) }{1-\left( \frac{v-1}{v} \right) \left (\frac{w-1}{w} \right) }  = \frac{4vw -3w -v +1 }{ v+w-1}.
            \end{align*} 
 
\noindent
{\bf Case 2:} The game begins with the cop and robber at position \ref{set2}. 
Letting $x=\frac{v-1}{v}$ and $y=\frac{w-1}{w}$, note
\begin{align}\nonumber
\EE^{2}[X] & = \sum_{m=0}^\infty P_m^2( K_{v,w}) \\\nonumber            & = 1 + 2\theta y + 2\theta^2xy + 2 \theta^3xy^2 +2\theta^4 x^2y^2 + \cdots \\
           \nonumber
           & =  2 \sum_{i=0}^\infty (\theta^2xy)^i + 2\theta y \sum_{i=0}^\infty (\theta^2 xy)^i \\
           & = \frac{1( 1+ \theta y)}{1-\theta^2 xy} . 
\label{tosimplify2}
\end{align}
Substituting $x=\frac{v-1}{v}$ and $y=\frac{w-1}{w}$ back in Equation \eqref{tosimplify2} yields
\begin{align*} \EE^2[X]  & = \frac{1\left ( 1+ \theta \left( \frac{w-1}{w} \right ) \right )}{1-\theta^2 \left( \frac{v-1}{v} \right)  \left ( \frac{w-1}{w} \right ) }  = \frac{2 ( vw - \theta v(w-1) )}{ vw - \theta^2 (v-1)(w-1) }.
                 \qedhere \end{align*} 
\end{proof}

\section{General method for identifying probabilities  and expected encounter times}\label{sec:algorithm}
In this section we present a general method that can be used to study this variant of the cop and robber game on any finite connected graph. To begin we define $R_m^i$ and $C_m^i$ to be probability that the game lasts at least $m$ moves with the robber or cop beginning the game in position $i$ on the graph $G$, respectively.

The general method will proceed as follows:

\begin{itemize}
    \item Identify the different possible starting positions for the robber and copper, and denote them $R^i$ and $C^i$ respectively.
    \item Write down recursive formulas for the probabilities $R_m^i$ and $C_m^i$ that the game lasts $m$ moves with the robber or cop beginning the game in position $i$, respectively.
    \item Use the recursive formulas for $R_m^i$ and $C_m^i$ to find generating functions $P_i(t) = \sum_{n=0}^\infty P_n^i t^n$ describing the probability that the game lasts at least $n=2m$ rounds (where we a round consists of two moves).
    \item Evaluate $P_i(t)$ at $t=1$ to find the expected number of rounds.
\end{itemize}

To illustrate this method, we apply it to cycle graphs.
\subsection{Cycle Graphs}\label{sec:cycle}
Denote the cycle graph on $v$ vertices by $\Cycle_v$.  
% We will consider two separate cases based on whether the number of vertices $v$ is even or odd. First we consider the case when $v$ is even. 
As before, we let $R^i_m$ and $C^i_m$ be the probabilities that the game lasts at least $m$ moves with the robber and cop beginning the game starting at position $i$. In this case $i$ represents the distance apart the cop and robber are at the beginning of the game, hence $1\leq i\leq \lfloor v/2\rfloor $. 
Our first result gives recursive formulas for even and odd length cycles.

\begin{lemma} 
If $v=2k$, then on an even-length cycle graph $\Cycle_v$, we have the following recursive formulas:
\[
R_m^i=\begin{cases}
    \frac{1}{2}C^{i+1}_{m-1}, & i=1 \\
    & \\
    \frac{1}{2}C^{i-1}_{m-1}+\frac{1}{2}C^{i+1}_{m-1}, & 1<i< v/2  \\
    & \\
     C^{v/2-1}_{m-1}, & i= v/2 
\end{cases}
\qquad\mbox{and}\qquad
C_m^i=\begin{cases}
    \frac{\theta}{2}R^{i+1}_{m-1}, & i=1 \\
    & \\
    \left(1-\frac{\theta}{2}\right)R^{i-1}_{m-1}+\frac{\theta}{2}R^{i+1}_{m-1}, & 1<i< v/2  \\
    & \\
     R^{v/2-1}_{m-1}, & i= v/2 .
\end{cases}
\]
\noindent
If $v=2k+1$, then on an odd-length cycle graph $\Cycle_v$, we have the following recursive formulas:

\[\small{
R_m^i=\begin{cases}
    \frac{1}{2}C^{i+1}_{m-1}, & i=1 \\
    & \\
    \frac{1}{2}C^{i-1}_{m-1}+\frac{1}{2}C^{i+1}_{m-1}, & 1<i<\frac{v-1}{2}  \\
    & \\
     \frac{1}{2}C^{(v-3)/2}_{m-1}+\frac{1}{2}C^{(v-1)/2}_{m-1}, & i=\frac{v-1}{2} 
\end{cases}
\qquad\mbox{and}\qquad
C_m^i=\begin{cases}
    \frac{\theta}{2}R^{i+1}_{m-1}, & i=1 \\
    & \\
    \left(1-\frac{\theta}{2}\right)R^{i-1}_{m-1}+\frac{\theta}{2}R^{i+1}_{m-1}, & 1<i<\frac{v-1}{2}  \\
    & \\
     \left(1-\frac{\theta}{2}\right)R^{(v-3)/2}_{m-1}+\frac{\theta}{2}R^{(v-1)/2}_{m-1}, & i=\frac{v-1}{2} .
\end{cases}
}
\]

\end{lemma}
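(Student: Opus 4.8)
The plan is to derive every case of these recursions by a single-step analysis, conditioning on the one move that occurs at the current turn. First I would fix coordinates, labelling the vertices $0,1,\dots,v-1$ around $\Cycle_v$, and observe that the only relevant state is the combinatorial distance $i$ between cop and robber, which ranges over $\{0,1,\dots,\lfloor v/2\rfloor\}$ with $i=0$ meaning the game has ended. By the vertex-transitivity and reflection symmetry of the cycle, the survival probability depends only on this distance together with whose turn it is; this is exactly what makes $R_m^i$ and $C_m^i$ well defined. Since each turn advances the move counter by one and passes the turn to the other player, a robber-to-move state of length $m$ becomes, after one move, a cop-to-move state of length $m-1$, which is the source of the alternation $R_m^i\leftrightarrow C_{m-1}^{\,\cdot}$ and $C_m^i\leftrightarrow R_{m-1}^{\,\cdot}$ visible in the statement.

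Next I would compute the robber's transition. Every vertex of $\Cycle_v$ has exactly two neighbours, so the drunken robber moves to each with probability $\tfrac12$. For an interior distance $1<i<\lfloor v/2\rfloor$ the two neighbours lie at distances $i-1$ and $i+1$ and neither is the cop's vertex, giving $R_m^i=\tfrac12 C_{m-1}^{i-1}+\tfrac12 C_{m-1}^{i+1}$; at $i=1$ one neighbour is the cop's own vertex, so with probability $\tfrac12$ the robber is caught and the surviving branch contributes $R_m^1=\tfrac12 C_{m-1}^2$. I would then repeat this for the tipsy cop, splitting her move into the directed part (probability $1-\theta$, which decreases the distance by one along the unique shorter arc) and the random part (probability $\theta$, each neighbour with probability $\tfrac12$). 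For interior $i$ the two contributions combine into a transition to $i-1$ with probability $1-\tfrac{\theta}{2}$ and to $i+1$ with probability $\tfrac{\theta}{2}$, i.e. $C_m^i=(1-\tfrac{\theta}{2})R_{m-1}^{i-1}+\tfrac{\theta}{2}R_{m-1}^{i+1}$; at $i=1$ the directed move together with half the random move captures the robber, leaving only $C_m^1=\tfrac{\theta}{2}R_{m-1}^2$.

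Finally, and this is where I expect the only genuine subtlety, I would handle the maximum-distance boundary, which is exactly where the even and odd cases diverge. On an even cycle $v=2k$ the unique farthest vertex is the antipode, both of whose neighbours sit at distance $v/2-1$; hence \emph{every} move of either player strictly decreases the distance, producing the $\theta$-free relations $R_m^{v/2}=C_{m-1}^{v/2-1}$ and $C_m^{v/2}=R_{m-1}^{v/2-1}$ (note that here the directed cop move is not even unambiguous, since both directions are equally short, but this does not matter because both lead to distance $v/2-1$). On an odd cycle $v=2k+1$ there are instead two farthest vertices, and the key point is that they are adjacent to each other: from a distance-$k$ vertex one neighbour returns to distance $k-1=(v-3)/2$ while the other merely stays at distance $k=(v-1)/2$. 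For the robber this yields $R_m^{(v-1)/2}=\tfrac12 C_{m-1}^{(v-3)/2}+\tfrac12 C_{m-1}^{(v-1)/2}$, and for the cop the directed move still lands on the strictly closer neighbour, so the distance-preserving move occurs only through the random half, giving $C_m^{(v-1)/2}=(1-\tfrac{\theta}{2})R_{m-1}^{(v-3)/2}+\tfrac{\theta}{2}R_{m-1}^{(v-1)/2}$.

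The main obstacle is therefore not any computation but this boundary bookkeeping: one must verify carefully that on the odd cycle a ``move away'' preserves rather than increases the distance (because the two antipodal-type vertices are adjacent), that the directed cop move is unambiguous there, and that on the even antipode the usual increase-distance branch disappears entirely. Once the three regimes $i=1$, $1<i<\lfloor v/2\rfloor$, and $i=\lfloor v/2\rfloor$ are each treated as above, the eight formulas in the statement follow directly.
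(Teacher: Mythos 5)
Your proposal is correct and follows essentially the same route as the paper: condition on the single move at the current turn, track only the cop--robber distance, and combine the robber's uniform $\tfrac12$--$\tfrac12$ step with the cop's $(1-\tfrac{\theta}{2})$ closer / $\tfrac{\theta}{2}$ farther split. In fact you are somewhat more complete than the paper, which writes out only the interior case $1<i<\lfloor v/2\rfloor$ via probability trees and dismisses the $i=1$ and $i=\lfloor v/2\rfloor$ boundaries as ``similar analyses,'' whereas you explicitly verify the antipodal bookkeeping that distinguishes the even and odd cycles.
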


\begin{proof} 
We begin by considering what can happen during the first move of an $m$-move game, when the cop and robber are a distance of more than one apart and not located on opposite sides of the cycle. More precisely, we begin our analysis with the case where $i$, the distance  between the cop and robber at the start of the game, satisfies $1<i<\lfloor \frac{v}{2} \rfloor$. In this case the following probability trees describe what may happen in a single move of the game: 
% Even the video they consider a single move a time step...
% \textcolor{red}{
% I am using the exact same technique as this video:
% \url{https://ocw.mit.edu/resources/res-6-012-introduction-to-probability-spring-2018/part-iii-random-processes/gamblers-ruin/}}
\begin{center}
% Set the overall layout of the tree
\tikzstyle{level 1}=[level distance=3.5cm, sibling distance=3.5cm]
\tikzstyle{level 2}=[level distance=3.5cm, sibling distance=2cm]

% Define styles for bags and leafs
\tikzstyle{bag} = [text width=4em, text centered]
\tikzstyle{end} = [circle, minimum width=3pt,fill, inner sep=0pt]

% The sloped option gives rotated edge labels. Personally
% I find sloped labels a bit difficult to read. Remove the sloped options
% to get horizontal labels. 
\begin{tikzpicture}[grow=right, sloped]
\node[bag] {$R^i_{m} $ }
    child {
        node[bag] {$C^{i-1}_{m-1}$}        
            %child {
            %    node[end, label=right:
            %        {$\frac{1}{2}\cdot (1- \frac{\theta}{2}) \cdot R^{i-2}_{2m-2}$}] {}
            %    edge from parent
            %    node[above] {Closer}
            %    node[below]  {$1-\frac{\theta}{2}$}
            %}
            %child {
            %    node[end, label=right:
            %        {\textcolor{red}{$\frac{1}{2}\cdot\frac{\theta}{2} \cdot R^i_{2m-2}$}}] 
            %{}
            %    edge from parent
            %    node[above] {Further}
            %    node[below]  {$\frac{\theta}{2}$}
            %}
            edge from parent 
            node[above] {Closer}
            node[below]  {$\frac{1}{2}$}
    }
    child {
        node[bag] {$C^{i+1}_{m-1}$}        
        %child {
        %        node[end, label=right:
        %            {\textcolor{red}{$\frac{1}{2}\cdot (1-\frac{\theta}{2}) \cdot %R^{i}_{2m-2}$}}] {}
        %        edge from parent
        %        node[above] {Closer}
        %        node[below]  {$1-\frac{\theta}{2}$}
        %    }
            %child {
            %    node[end, label=right:
            %        {$\frac{1}{2}\cdot\frac{\theta}{2} \cdot R^{i+2}_{2m-2}$}] {}
            %    edge from parent
            %    node[above] {Further}
            %    node[below]  {$\frac{\theta}{2}$}
            %}
        edge from parent         
            node[above] {Further}
            node[below]  {$\frac{1}{2}$}
    };
\end{tikzpicture}\qquad
\begin{tikzpicture}[grow=right, sloped]
\node[bag] {$C^i_{m} $ }
    child {
        node[bag] {$R^{i-1}_{m-1}$}        
            %child {
            %    node[end, label=right:
            %        {$\frac{1}{2}\cdot (1- \frac{\theta}{2}) \cdot R^{i-2}_{2m-2}$}] {}
            %    edge from parent
            %    node[above] {Closer}
            %    node[below]  {$1-\frac{\theta}{2}$}
            %}
            %child {
            %    node[end, label=right:
            %        {\textcolor{red}{$\frac{1}{2}\cdot\frac{\theta}{2} \cdot R^i_{2m-2}$}}] 
            %{}
            %    edge from parent
            %    node[above] {Further}
            %    node[below]  {$\frac{\theta}{2}$}
            %}
            edge from parent 
            node[above] {Closer}
            node[below]  {$1-\frac{\theta}{2}$}
    }
    child {
        node[bag] {$R^{i+1}_{m-1}$}        
        %child {
        %        node[end, label=right:
        %            {\textcolor{red}{$\frac{1}{2}\cdot (1-\frac{\theta}{2}) \cdot %R^{i}_{2m-2}$}}] {}
        %        edge from parent
        %        node[above] {Closer}
        %        node[below]  {$1-\frac{\theta}{2}$}
        %    }
            %child {
            %    node[end, label=right:
            %        {$\frac{1}{2}\cdot\frac{\theta}{2} \cdot R^{i+2}_{2m-2}$}] {}
            %    edge from parent
            %    node[above] {Further}
            %    node[below]  {$\frac{\theta}{2}$}
            %}
        edge from parent         
            node[above] {Further}
            node[below]  {$\frac{\theta}{2}$}
    };
\end{tikzpicture}
\end{center}
To calculate the probability $R^i_m$ that the game lasts $m$ moves  when the robber moves first from position~$i$, we realize that $\frac{1}{2}$ the time he moves closer to the cop and $\frac{1}{2}$ the time he moves further, and the calculation of $R^i_m=\frac{1}{2}C^{i-1}_{m-1}+\frac{1}{2} C^{i+1}_{m-1}$ reduces to calculating the two probabilities $C^{i-1}_{m-1}
$ and $C^{i+1}_{m-1}$, which are the probabilities that the game will last $m-1$ moves provided the cop moves first from positions $i-1$ or $i+1$ respectively.

Similarly, to calculate the probability $C^i_m$ that the game lasts $m$ moves when the cop moves first from position $i$, we realize that she only moves further away from the robber 1/2 of the time when she is moving randomly. Since she moves randomly with probability $\theta$, this means she moves further with probability $\frac{\theta}{2}$. The remaining proportion  ($1- \frac{\theta}{2} $) she moves closer to the robber.  Hence 
$C^i_m=(1-\frac{\theta}{2})R^{i-1}_{m-1}+\frac{\theta}{2} R^{i+1}_{m-1}$ and 
the calculation of $C^i_{m}$ reduces recursively to calculating the probabilities $R^{i-1}_{m-1}$ and $R^{i+1}_{m-1}$ that the game lasts $m-1$ moves when the robber moves first from positions $i-1$ and $i+1$ respectively.

The remaining probabilities when $i=1$ and when $i=\lfloor \frac{v}{2} \rfloor$ follow from similar analyses.
\end{proof}

Next we apply the recursive formulas for $R_m^i$ and $C_m^i$ to calculate recursions for $P_n^i$, where  $n=2m$.  We remark that in small cycles $C_v$ with $v<10$, some of the cases described below collapse. We will consider one such case in detail following the proof of this next result.

\begin{theorem} \label{thm:probability_cycle}Let $v\geq 10$. If $\psi = \frac{1}{2} - \frac{\theta}{4}$,  
then the probability $P_n$ that a game lasts at least $n$ rounds ($n=2m$ moves) on an even length cycle is given by 
\[
P_n^i=\begin{cases}
        \psi P^{i}_{n-1}+\frac{\theta}{4}P^{i+2}_{n-1}, & i= 1 \\
    \frac{1}{2} P^{i}_{n-1}+\frac{\theta}{4}P^{i+2}_{n-1}, & i= 2 \\
    \psi P^{i-2}_{n-1}+\frac{1}{2} P^{i}_{n-1}+\frac{\theta}{4}P^{i+2}_{n-1}, & 2<i< \frac{v}{2}-1\\
    \psi P^{v/2-2}_{n-1}+\frac{1}{2}P^{v/2-1}_{n-1}+\frac{\theta}{4}P^{v/2}_{n-1}, & i=\frac{v}{2}-1 \\
    2\psi P^{v/2-2}_{n-1}+\frac{\theta}{2}P^{v/2}_{n-1}, & i=\frac{v}{2}\\
\end{cases}
\]
and on an odd length cycle 
\[
P_n^i=\begin{cases}
    \psi P^{i}_{n-1}+\frac{\theta}{4}P^{i+2}_{n-1}, & i= 1 \\
    \frac{1}{2} P^{i}_{n-1}+\frac{\theta}{4}P^{i+2}_{n-1}, & i= 2 \\
     \psi P^{i-2}_{n-1}+\frac{1}{2}P^{i}_{n-1}+\frac{\theta}{4}P^{i+2}_{n-1}, & 2<i<\frac{v-3}{2}\\
    \psi P^{i-2}_{n-1}+\frac{1}{2}P^{i}_{n-1}+\frac{\theta}{4}P^{i+1}_{n-1}, & i= \frac{v-3}{2} \\
    \psi P^{i-2}_{n-1} +\psi P^{i-1}_{n-1}+ \frac{\theta}{2} P^{i}_{n-1}, & i= \frac{v-1}{2}.\\
\end{cases}
\]
\end{theorem}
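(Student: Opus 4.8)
The plan is to derive each one-round recursion by composing a single robber move with a single cop move, feeding the two families of one-move recursions for $R_m^i$ and $C_m^i$ from the preceding lemma into one another. Since the robber always moves first and a round consists of two consecutive moves, the probability $P_n^i$ that the game survives $n$ further rounds starting from distance $i$ equals $R_{2n}^i$. Applying the robber recursion once writes $R_{2n}^i$ as a $\tfrac12$--$\tfrac12$ combination of $C_{2n-1}^{i-1}$ and $C_{2n-1}^{i+1}$, and applying the cop recursion to each of these rewrites them in terms of $R_{2n-2}^{\,\cdot} = P_{n-1}^{\,\cdot}$. Collecting the resulting terms yields the claimed one-round recursion.

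First I would treat the generic interior range. For $2 < i < \tfrac{v}{2}-1$ every index produced lies strictly between the two boundary values, so only the interior sub-cases of both recursions are used, and substitution gives
\[
R_{2n}^i = \tfrac12\Bigl[(1-\tfrac\theta2)R_{2n-2}^{i-2} + \tfrac\theta2 R_{2n-2}^{i}\Bigr] + \tfrac12\Bigl[(1-\tfrac\theta2)R_{2n-2}^{i} + \tfrac\theta2 R_{2n-2}^{i+2}\Bigr].
\]
Collecting the middle terms via $\tfrac\theta2 + (1-\tfrac\theta2) = 1$ and writing $\psi = \tfrac12-\tfrac\theta4 = \tfrac12(1-\tfrac\theta2)$ simplifies this to $\psi P_{n-1}^{i-2} + \tfrac12 P_{n-1}^{i} + \tfrac\theta4 P_{n-1}^{i+2}$, exactly as stated. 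The identity $\psi + \tfrac12 + \tfrac\theta4 = 1$ confirms that no capture is possible in one round from an interior distance, which is a convenient consistency check.

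The remaining work is boundary bookkeeping: for each special value of $i$ I would feed the corresponding boundary sub-case of the lemma into the composition. Near the robber's end this means using $R_m^1 = \tfrac12 C_{m-1}^2$ (so that distance $1$ absorbs the capture probability) and the $i=2$ robber step, which pulls in the boundary cop step $C_m^1 = \tfrac\theta2 R_{m-1}^2$; near the far end it means using the antipodal steps, namely $C_m^{v/2} = R_{m-1}^{v/2-1}$ on even cycles and the distance-preserving ``other far vertex'' terms carried by $R_m^{(v-1)/2}$ and $C_m^{(v-1)/2}$ on odd cycles. Pushing the substitution through each case and simplifying with the same two algebraic identities produces the listed formulas; I would verify each case by checking that its coefficients sum to $1$ where capture is impossible and to strictly less than $1$ where it is.

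The hypothesis $v \ge 10$ is what keeps this clean, and I expect the main obstacle to be precisely the boundary interplay it controls. For $v$ large enough the two ends of the distance range are far apart, so each boundary case modifies only one sub-recursion and the generic interior formula holds on a genuine middle range; for small cycles several of these cases coincide (for instance $i=2$ can meet $i=\tfrac{v}{2}-1$, or the two far-vertex phenomena overlap), the compositions telescope differently, and the stated formulas would have to be merged. I would therefore prove the theorem under $v \ge 10$ as given and dispatch the small-cycle collapses separately, as the paper announces it will do immediately afterward.
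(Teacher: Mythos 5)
Your method is exactly the paper's: its proof is this same two-step composition, drawn as a probability tree whose four leaves are $\tfrac12(1-\tfrac{\theta}{2})R^{i-2}_{2m-2}$, $\tfrac12\cdot\tfrac{\theta}{2}R^{i}_{2m-2}$, $\tfrac12(1-\tfrac{\theta}{2})R^{i}_{2m-2}$, $\tfrac12\cdot\tfrac{\theta}{2}R^{i+2}_{2m-2}$, followed by the same merging of the two middle terms; like you, it works only the interior case in full and dismisses the boundary cases as degenerations of the same tree.

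However, your closing claim that pushing the substitution through each boundary case ``produces the listed formulas'' fails for $i=\tfrac{v}{2}-1$ on an even cycle. There the robber's ``further'' branch lands at the antipode $\tfrac{v}{2}$, from which the cop's move returns the distance to $\tfrac{v}{2}-1$ with probability $1$ (the lemma gives $C^{v/2}_{m}=R^{v/2-1}_{m-1}$), so the composition yields $\psi P^{v/2-3}_{n-1}+\left(\tfrac12+\tfrac{\theta}{4}\right)P^{v/2-1}_{n-1}$, not the displayed $\psi P^{v/2-2}_{n-1}+\tfrac12 P^{v/2-1}_{n-1}+\tfrac{\theta}{4}P^{v/2}_{n-1}$. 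The displayed line cannot be correct as written: on an even cycle a full round changes the distance by $0$ or $\pm 2$, so from $i=\tfrac{v}{2}-1$ only distances of the same parity, namely $\tfrac{v}{2}-3$ and $\tfrac{v}{2}-1$, are reachable, whereas the displayed right-hand side involves $\tfrac{v}{2}-2$ and $\tfrac{v}{2}$. Note also that your proposed sanity check (coefficients summing to $1$) would not detect this, since both expressions pass it. So you must actually carry out this case rather than assert it, and in doing so you will find you are proving a corrected statement, not the one given.
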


\begin{proof}

The following probability tree describes one round (2 moves) in the general case when the cop and robber are further than 2 moves apart and not located on opposite sides of the cycle.

\begin{center}
% Set the overall layout of the tree
\tikzstyle{level 1}=[level distance=3.5cm, sibling distance=3.5cm]
\tikzstyle{level 2}=[level distance=3.5cm, sibling distance=2cm]

% Define styles for bags and leafs
\tikzstyle{bag} = [text width=4em, text centered]
\tikzstyle{end} = [circle, minimum width=3pt,fill, inner sep=0pt]

% The sloped option gives rotated edge labels. Personally
% I find sloped labels a bit difficult to read. Remove the sloped options
% to get horizontal labels. 
\begin{tikzpicture}[grow=right, sloped]
\node[bag] {$R^i_{2m} $ }
    child {
        node[bag] {$C^{i-1}_{2m-1}$}        
            child {
                node[end, label=right:
                    {$\frac{1}{2}\cdot (1- \frac{\theta}{2}) \cdot R^{i-2}_{2m-2}$}] {}
                edge from parent
                node[above] {Closer}
                node[below]  {$1-\frac{\theta}{2}$}
            }
            child {
                node[end, label=right:
                    {{$\frac{1}{2}\cdot\frac{\theta}{2} \cdot R^i_{2m-2}$}}] {}
                edge from parent
                node[above] {Further}
                node[below]  {$\frac{\theta}{2}$}
            }
            edge from parent 
            node[above] {Closer}
            node[below]  {$\frac{1}{2}$}
    }
    child {
        node[bag] {$C^{i+1}_{2m-1}$}        
        child {
                node[end, label=right:
                    {{$\frac{1}{2}\cdot (1-\frac{\theta}{2}) \cdot R^{i}_{2m-2}$}}] {}
                edge from parent
                node[above] {Closer}
                node[below]  {$1-\frac{\theta}{2}$}
            }
            child {
                node[end, label=right:
                    {$\frac{1}{2}\cdot\frac{\theta}{2} \cdot R^{i+2}_{2m-2}$}] {}
                edge from parent
                node[above] {Further}
                node[below]  {$\frac{\theta}{2}$}
            }
        edge from parent         
            node[above] {Further}
            node[below]  {$\frac{1}{2}$}
    };
\end{tikzpicture}
\end{center}

We note that $P^i_{n} = R^{i}_{2m}$ and $P^i_{n-1} = R^{i}_{2m-2}$ since the robber moves first in the game, and we are assuming $n$ rounds is $2m$ moves.  We also note that there are two ways to get from $P^i_{n}$ to $P^{i}_{n-1}$ with combined probability $\frac{\theta}{4} + \frac{1}{2} - \frac{\theta}{4} =\frac{1}{2}$. 
Analyzing the last column of this tree gives the following recursive formula for the probability
$$P^i_n  = \psi P_{n-1}^{i-2} + {\frac{1}{2}} P_{n-1}^i + \frac{\theta}{4} P_{n-1}^{i+2}.   $$

The other cases are obtained by analyzing this same tree when $i=1,2,\lfloor \frac{v}{2} \rfloor -1$, or $\lfloor \frac{v}{2} \rfloor$.   When $i=1,2$ the game can end during the round, so some of the branches in the tree disappear, and when $i=\lfloor \frac{v}{2} \rfloor -1$ or $\lfloor \frac{v}{2} \rfloor$ a player can take a step away from the other player, but their distance remains the same or decreases by one because they are walking around the far side of the cycle.
\end{proof}

 Theorem \ref{thm:probability_cycle} considers large cycle graphs with $v\geq 10$.  To help illustrate the degenerate cases when $i=1,2$, $\lfloor \frac{v}{2} \rfloor-1$ and $\lfloor \frac{v}{2} \rfloor$, we analyze  $\Cycle_5$ where $1 =\lfloor \frac{v}{2} \rfloor-1$ and $2= \lfloor \frac{v}{2} \rfloor$. Hence, on $\Cycle_5$ there are only two starting positions: position 1 is depicted on the left and position 2 is depicted on the right in Figure \ref{cycle5}, and they describe the distance between the cop and robber, which is 1 or 2, respectively.

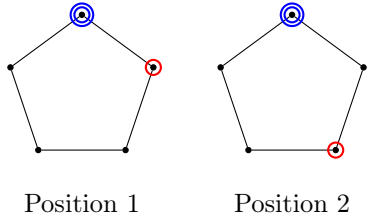
\begin{figure}[h]
\centering
\begin{tikzpicture}
\vertex[fill](a1) at (0,1) {};
\vertex[fill](a2) at (-.95,.30) {};
\vertex[fill](a3) at (.95,.30) {};
%\vertex[fill](a4) at (.3090169938, -.9510565165) {};
%\vertex[fill](a5) at (1,0) {};
\vertex[fill](a4) at (-.58,-.80) {};
\vertex[fill](a5) at (.58,-.80) {};
\draw (a1)--(a2) (a2)--(a4) (a3)--(a5) (a4) -- (a5) (a3)--(a1); %(a1)--(a3) (a1)--(a4) (a1)--(a5);
%\draw (a2)--(a3); %(a2)--(a4) (a2)--(a5);
%\draw (a3)--(a4) (a3)--(a5);
%\draw (a4)--(a5);
\draw[blue,thick] (a1) circle (.1cm);
\draw[blue,thick] (a1) circle (.15cm);
\draw[red,thick] (a3) circle (.1cm);
\node at (0,-1.5){Position 1};
\end{tikzpicture}
\hspace{5mm}
\begin{tikzpicture}
\vertex[fill](a1) at (0,1) {};
\vertex[fill](a2) at (-.95,.30) {};
\vertex[fill](a3) at (.95,.30) {};
%\vertex[fill](a4) at (.3090169938, -.9510565165) {};
%\vertex[fill](a5) at (1,0) {};
\vertex[fill](a4) at (-.58,-.80) {};
\vertex[fill](a5) at (.58,-.80) {};
\draw (a1)--(a2) (a2)--(a4) (a3)--(a5) (a4) -- (a5) (a3)--(a1); %(a1)--(a3) (a1)--(a4) (a1)--(a5);
%\draw (a2)--(a3); %(a2)--(a4) (a2)--(a5);
%\draw (a3)--(a4) (a3)--(a5);
%\draw (a4)--(a5);
\draw[blue,thick] (a1) circle (.1cm);
\draw[blue,thick] (a1) circle (.15cm);
\draw[red,thick] (a5) circle (.1cm);
\node at (0,-1.5){Position 2};
\end{tikzpicture}
\captionof{figure}{Two starting positions in $\Cycle_5$.}\label{cycle5}
\end{figure}

In this special case, the probabilities $R_m^i$ and $C_m^i$ are related via the following recursive formulas: 
% \begin{align*} 
% R_m^1  & = \frac{1}{2}C_{m-1}^2 &  
% R_m^2 & = \frac{1}{2}C_{m-1}^1 + \frac{1}{2}   C_{m-1}^2\\
% C_m^1  & = \frac{\theta}{2}R_{m-1}^2   &
% C_m^2 & =  \left (1- \frac{\theta}{2}  \right)R_{m-1}^1 +  \frac{\theta}{2} R_{m-1}^2.
% \end{align*}
\[
R_m^1   = \frac{1}{2}C_{m-1}^2,\qquad  
R_m^2 = \frac{1}{2}C_{m-1}^1 + \frac{1}{2}   C_{m-1}^2,\qquad
C_m^1  = \frac{\theta}{2}R_{m-1}^2   ,\qquad
C_m^2  =  \left (1- \frac{\theta}{2}  \right)R_{m-1}^1 +  \frac{\theta}{2} R_{m-1}^2.\]

\begin{lemma}\label{lem:rec_cyle}
If $\psi = \frac{1}{2} - \frac{\theta}{4}$, then 
the probability $P_n^i$ that a game starting in position $i$ lasts at least $n$ rounds ($n=2m$ moves) on $\Cycle_5$ is given by 
\begin{align*} 
P_n^1  & = \psi P_{n-1}^1+ \frac{\theta}{4}  P_{n-1}^2, \mbox{ and}\\
P_n^2 & = \psi P_{n-1}^1 + \frac{\theta}{2} P_{n-1}^2  .
\end{align*}
\end{lemma}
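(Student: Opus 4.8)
The plan is to obtain the two claimed identities by composing the single-move recursions for $R^i_m$ and $C^i_m$ on $\Cycle_5$ that are displayed immediately above the statement. The first thing I would record is the dictionary between the round-indexed quantities $P^i_n$ and the move-indexed quantities $R^i_m$: since the robber always moves first and one round is two moves, a game lasting at least $n=2m$ rounds is exactly a game in which the robber, starting at distance $i$, survives $2m$ moves. Hence $P^i_n = R^i_{2m}$ and $P^i_{n-1}=R^i_{2m-2}$, just as in the proof of Theorem \ref{thm:probability_cycle}. With this identification, each decrement of $n$ peels off one robber move followed by one cop move, which is precisely what lets me glue the two one-move recursions together.

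Next I would carry out the two-step unfolding. For position $1$ I start from $R^1_{2m}=\tfrac12 C^2_{2m-1}$ and substitute the cop recursion $C^2_{2m-1}=(1-\tfrac\theta2)R^1_{2m-2}+\tfrac\theta2 R^2_{2m-2}$, so that everything is expressed at move $2m-2$. Rereading these as $P^1_{n-1}$ and $P^2_{n-1}$ and collecting coefficients gives $P^1_n = \tfrac12(1-\tfrac\theta2)P^1_{n-1}+\tfrac\theta4 P^2_{n-1}$, and recognizing $\tfrac12(1-\tfrac\theta2)=\tfrac12-\tfrac\theta4=\psi$ yields the first formula. For position $2$ I do the same starting from $R^2_{2m}=\tfrac12 C^1_{2m-1}+\tfrac12 C^2_{2m-1}$, substituting both $C^1_{2m-1}=\tfrac\theta2 R^2_{2m-2}$ and the same expression for $C^2_{2m-1}$; the two contributions of $R^2_{2m-2}$ combine as $\tfrac\theta4+\tfrac\theta4=\tfrac\theta2$, while the $R^1_{2m-2}$ term again carries coefficient $\psi$, giving the second formula.

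The only genuine care needed is the index bookkeeping and the boundary structure of $\Cycle_5$. I would make sure the parity is tracked so that one application of the round recursion corresponds to exactly one robber move and one cop move, and that no capture branch has been dropped. Because $\lfloor 5/2\rfloor = 2$, position $1$ is simultaneously the $i=1$ boundary case and the $i=\lfloor v/2\rfloor-1$ case of the general theorem, while position $2$ is the far-side case $i=\lfloor v/2\rfloor$; this is exactly why I would use the four collapsed $\Cycle_5$ recursions stated just before the lemma rather than the generic interior recursion. Once those indices are pinned down, the remainder is a mechanical substitution, and the constant $\psi$ emerges automatically as $\tfrac12\bigl(1-\tfrac\theta2\bigr)$.
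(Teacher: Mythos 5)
Your proposal is correct and follows essentially the same route as the paper: the paper's proof draws the one-round probability trees for $R^1_{2m}$ and $R^2_{2m}$, which is just a pictorial form of your algebraic substitution of the cop recursions $C^1_{2m-1}=\tfrac{\theta}{2}R^2_{2m-2}$ and $C^2_{2m-1}=\bigl(1-\tfrac{\theta}{2}\bigr)R^1_{2m-2}+\tfrac{\theta}{2}R^2_{2m-2}$ into the robber recursions, with the capture branches $C^0$, $R^0$ discarded. Your identification $P^i_n=R^i_{2m}$, $P^i_{n-1}=R^i_{2m-2}$ and the coefficient bookkeeping (including $\tfrac{\theta}{4}+\tfrac{\theta}{4}=\tfrac{\theta}{2}$ in position $2$) match the paper exactly.
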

\begin{proof}

The following probability trees describe one round (2 moves) when the robber and cop begin the game in positions 1 and 2, respectively.

\begin{center}
% Set the overall layout of the tree
\tikzstyle{level 1}=[level distance=2cm, sibling distance=3cm]
\tikzstyle{level 2}=[level distance=2cm, sibling distance=2cm]

% Define styles for bags and leafs
\tikzstyle{bag} = [text width=4em, text centered]
\tikzstyle{end} = [circle, minimum width=3pt,fill, inner sep=0pt]

% The sloped option gives rotated edge labels. Personally
% I find sloped labels a bit difficult to read. Remove the sloped options
% to get horizontal labels. 
\begin{tikzpicture}[grow=right, sloped]
\node[bag] {$R^1_{2m} $ }
    child {
        node[bag] {$C^{0}_{2m-1}$}        
            %child {
            %    node[end, label=right:
            %        {$\frac{1}{2}\cdot (1- \frac{\theta}{2}) \cdot R^{i-2}_{2m-2}$}] {}
            %    edge from parent
            %    node[above] {Closer}
            %    node[below]  {$1-\frac{\theta}{2}$}
            %}
            %child {
            %    node[end, label=right:
            %        {{$\frac{1}{2}\cdot\frac{\theta}{2} \cdot R^i_{2m-2}$}}] {}
            %    edge from parent
            %    node[above] {Further}
            %    node[below]  {$\frac{\theta}{2}$}
            %}
            edge from parent 
            node[above] {Closer}
            node[below]  {$\frac{1}{2}$}
    }
    child {
        node[bag] {$C^{2}_{2m-1}$}        
        child {
                node[end, label=right:
                    {{$\frac{1}{2}\cdot (1-\frac{\theta}{2}) \cdot R^{1}_{2m-2}$}}] {}
                edge from parent
                node[above] {Closer}
                node[below]  {$1-\frac{\theta}{2}$}
            }
            child {
                node[end, label=right:
                    {$\frac{1}{2}\cdot\frac{\theta}{2} \cdot R^{2}_{2m-2}$}] {}
                edge from parent
                node[above] {"Further"}
                node[below]  {$\frac{\theta}{2}$}
            }
        edge from parent         
            node[above] {Further}
            node[below]  {$\frac{1}{2}$}
    };
\end{tikzpicture}
\begin{tikzpicture}[grow=right, sloped]
\node[bag] {$R^2_{2m} $ }
    child {
        node[bag] {$C^{1}_{2m-1}$}        
            child {
                node[end, label=right:
                    {$\frac{1}{2}\cdot (1- \frac{\theta}{2}) \cdot R^{0}_{2m-2}$}] {}
                edge from parent
                node[above] {Closer}
                node[below]  {$1-\frac{\theta}{2}$}
            }
            child {
                node[end, label=right:
                    {{$\frac{1}{2}\cdot\frac{\theta}{2} \cdot R^2_{2m-2}$}}] {}
                edge from parent
                node[above] {Further}
                node[below]  {$\frac{\theta}{2}$}
            }
            edge from parent 
            node[above] {Closer}
            node[below]  {$\frac{1}{2}$}
    }
    child {
        node[bag] {$C^{2}_{2m-1}$}        
        child {
                node[end, label=right:
                    {{$\frac{1}{2}\cdot (1-\frac{\theta}{2}) \cdot R^{1}_{2m-2}$}}] {}
                edge from parent
                node[above] {Closer}
                node[below]  {$1-\frac{\theta}{2}$}
            }
            child {
                node[end, label=right:
                    {$\frac{1}{2}\cdot\frac{\theta}{2} \cdot R^{2}_{2m-2}$}] {}
                edge from parent
                node[above] {"Further"}
                node[below]  {$\frac{\theta}{2}$}
            }
        edge from parent         
            node[above] {"Further"}
            node[below]  {$\frac{1}{2}$}
    };
\end{tikzpicture}
\end{center}
Note that for any $\ell>0$, $C^0_\ell$ and $R^0_\ell$ result in the end of the game.  
Substituting the defined probabilities $P^i_n=R^{i}_{2m}$ and $P^i_{n-1}=R^i_{2m-2}$ into these trees,  we derive the desired recursions \[P^1_{n} = \psi P^1_{n-1} + \frac{\theta}{4} P^2_{n-1}\qquad\mbox{ and }\qquad P^2_n = \psi P_{n-1}^1 + \frac{\theta}{2}P^2_{n-1} .\qedhere\]
\end{proof}

We can now establish the following result. 

\begin{theorem} 
Define the generating functions $P_1(t) = \sum_{n=0}^\infty P_n^1 t^n $ and $P_2(t) = \sum_{n=0}^\infty P_n^2 t^n$, where $P^i_n$ is the probability that the game starting at position $i$ lasts $n$ rounds on $\Cycle_5$.  Then
\begin{align}
    P_1(t) &= \frac{1 - \frac{\theta t}{4}}{ (1-\frac{\theta t}{4})( 2-\psi t)-1} \\
    P_2(t) & = \frac{1 }{ (1-\frac{\theta t}{4})( 2-\psi t)-1}.
\end{align}
\end{theorem}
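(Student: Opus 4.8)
The plan is to convert the two recursions of Lemma~\ref{lem:rec_cyle} into a pair of linear equations satisfied by the generating functions $P_1(t)$ and $P_2(t)$, and then solve that $2\times 2$ system. First I would pin down the initial data: the game always lasts at least $0$ rounds, so $P_0^1 = P_0^2 = 1$, and the recursions $P_n^1 = \psi P_{n-1}^1 + \frac{\theta}{4}P_{n-1}^2$ and $P_n^2 = \psi P_{n-1}^1 + \frac{\theta}{2}P_{n-1}^2$ hold for every $n\geq 1$.

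Next I would multiply each recursion by $t^n$ and sum over $n\geq 1$. Because every term on the right-hand side carries the index $n-1$, each sum reindexes to a full generating function times $t$; for instance $\sum_{n\geq 1}\psi P_{n-1}^1 t^n = \psi t\,P_1(t)$. Pulling out the $n=0$ term on the left (which contributes $P_0^i = 1$) gives the functional equations
\[
P_1(t) = 1 + \psi t\,P_1(t) + \tfrac{\theta t}{4}P_2(t),\qquad P_2(t) = 1 + \psi t\,P_1(t) + \tfrac{\theta t}{2}P_2(t).
\]

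The third step is to solve this linear system. Subtracting the two equations eliminates the common term $1 + \psi t\,P_1(t)$ and yields $P_1(t) - P_2(t) = -\frac{\theta t}{4}P_2(t)$, that is $P_1(t) = \left(1-\frac{\theta t}{4}\right)P_2(t)$; this already produces the ratio appearing in the two claimed numerators. Substituting this relation into the second functional equation gives a single equation for $P_2(t)$, whose denominator I would collect as $1 - \psi t - \frac{\theta t}{2} + \frac{\psi\theta t^2}{4}$.

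The final bookkeeping step is to verify that this denominator equals $\left(1-\frac{\theta t}{4}\right)(2-\psi t)-1$: expanding the product gives $2 - \psi t - \frac{\theta t}{2} + \frac{\psi\theta t^2}{4}$, and subtracting $1$ recovers exactly the collected denominator. This confirms $P_2(t) = 1/\big(\left(1-\frac{\theta t}{4}\right)(2-\psi t)-1\big)$, and the relation $P_1(t) = \left(1-\frac{\theta t}{4}\right)P_2(t)$ then yields the stated formula for $P_1(t)$. None of the steps is a serious obstacle; the only points requiring care are recording the correct initial values $P_0^i = 1$ and confirming the reindexing is a valid identity of formal power series (equivalently, on the disk of convergence), so that the later evaluation at $t=1$ prescribed by the general method legitimately returns the expected number of rounds.
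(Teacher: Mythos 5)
Your proposal is correct and follows essentially the same route as the paper: both start from the recursions of Lemma~\ref{lem:rec_cyle} with $P_0^1=P_0^2=1$, translate them into a linear system for $P_1(t)$ and $P_2(t)$, and solve it. The only cosmetic difference is that the paper forms the combinations $P_n^2-P_n^1$ and $P_n^2-2P_n^1$ before passing to generating functions, whereas you pass to generating functions first and then eliminate; the resulting relation $P_1(t)=\left(1-\tfrac{\theta t}{4}\right)P_2(t)$ and the final denominators agree.
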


\begin{proof} Given the recursive formulas in Lemma \ref{lem:rec_cyle} subtracting $P_n^1-P_n^2$ and $P_n^2-2P_n^1$ gives
\begin{align*} 
P_n^2-P_n^1  & =  \frac{\theta}{4}P_{n-1}^2 \\
P_n^2-2P_n^1 & = -\psi P_{n-1}^1. 
\end{align*}

Multiplying both equations by $t^{n}$ and summing over $n\geq 1$ gives the following system of equations for the generating functions $P_1(t)$ and $P_2(t)$:
\begin{align*} 
\sum_{n\geq 1}P_n^2t^n-\sum_{n\geq 1}P_n^1t^n  & =  \sum_{n\geq 1}\frac{\theta}{4}P_{n-1}^2t^n\\
\left[\left(\sum_{n\geq 0}P_n^2t^n\right)-P_0^2\right]-\left[\left(\sum_{n\geq 0}P_n^1t^n\right)-P_0^1\right]  & =  \frac{\theta t}{4}\sum_{n\geq 1}P_{n-1}^2t^{n-1}\\
\left[P_2(t)-P_0^2\right]-\left[P_1(t)-P_0^1\right]  & =  \frac{\theta t}{4}P_2(t)\\
P_2(t)-P_1(t) & =  \frac{\theta t}{4}P_2(t),\mbox{ since $P_0^1=P_0^2=1$}
\end{align*}
and 
\begin{align*} 
\sum_{n\geq 1}P_n^2t^n-2\sum_{n\geq 1}P_n^1t^n  & =  -\sum_{n\geq 1}\psi P_{n-1}^1t^n\\
\left[\left(\sum_{n\geq 1}P_n^2t^n\right)-P_0^2\right] -2\left[\left(\sum_{n\geq 0}P_n^1t^n\right)-P_0^1\right]  & =  -\psi t\sum_{n\geq 1} P_{n-1}^1t^{n-1}\\
\left[P_2(t)-P_0^2\right] -2\left[P_1(t)-P_0^1\right]  & =  -\psi tP_1(t)\\
P_2(t) -2P_1(t) & =  -\psi tP_1(t)-1,\mbox{ since $P_0^1=P_0^2=1$.}
\end{align*}
Thus 
\begin{align*} 
P_2(t)-P_1(t)  & = \frac{\theta t}{4}P_2(t) \\
P_2(t)-2P_1(t) & = -\psi t P_1(t)-1.
\end{align*}
Solving this system completes the proof.
\end{proof}

\begin{comment}
\shaun{Extracting the coefficient of $t^n$ we obtain
\begin{align*} 
P_n^1  & = \sum\limits_{i=n/2}^n\dbinom{i+1}{n-i}\psi^i\left(\frac{-\theta}{4}\right)^{n-i} \\
P_n^2  & = \sum\limits_{i=n/2}^n\dbinom{i}{n-i}\psi^i\left(\frac{-\theta}{4}\right)^{n-i}
\end{align*}
}
\end{comment}

Since evaluating these generating functions at $t=1$ gives the expected number of rounds, we have the following corollary.

\begin{corollary}
Let $X$ denote the random variable giving the capture time and let $\EE^i[X]$ denote the expected encounter time between the robber and cop beginning the game in position $i$ on $\Cycle_5$.  If $\theta$ denotes the proportion of the cop's movement that is random, then the expected encounter time between the robber and cop is given by
\begin{align}
    \EE^1[X]= & \frac{1 - \frac{\theta}{4}}{ (1-\frac{\theta}{4})( 2-\psi)-1},\mbox{ and}\\
    \EE^2[X]= & \frac{1 }{ (1-\frac{\theta}{4})( 2-\psi)-1}.
\end{align}

\end{corollary}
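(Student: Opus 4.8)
The final statement is the corollary giving $\EE^1[X]$ and $\EE^2[X]$ on $\Cycle_5$. The plan is to invoke Lemma~\ref{lemma}, which tells us that the expected capture time equals $\sum_{m=0}^\infty P_m(G)$; but the preceding theorem has already packaged exactly this sum into the generating functions $P_1(t)$ and $P_2(t)$. Indeed, since $P_i(t)=\sum_{n=0}^\infty P_n^i t^n$, evaluating at $t=1$ yields $P_i(1)=\sum_{n=0}^\infty P_n^i$, which is precisely the expected number of rounds starting from position~$i$. So the entire corollary reduces to substituting $t=1$ into the two closed forms for $P_1(t)$ and $P_2(t)$ established in the preceding theorem.

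Concretely, I would proceed as follows. First I would state that, because the method of Section~\ref{sec:algorithm} is set up so that $P_n^i$ is the probability that the game lasts at least $n$ rounds, the quantity $\sum_{n=0}^\infty P_n^i$ is by Lemma~\ref{lemma} the expected encounter time $\EE^i[X]$ (measured in rounds). Then I would observe that this infinite sum is exactly $P_i(1)$ for the generating function $P_i(t)$ just derived. Finally I would set $t=1$ in each of
\[
P_1(t)=\frac{1-\frac{\theta t}{4}}{(1-\frac{\theta t}{4})(2-\psi t)-1},\qquad
P_2(t)=\frac{1}{(1-\frac{\theta t}{4})(2-\psi t)-1},
\]
which immediately gives the two displayed expressions for $\EE^1[X]$ and $\EE^2[X]$.

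The only genuine content beyond this substitution is a convergence/justification check, which is where any real obstacle lies. Evaluating a generating function at $t=1$ is legitimate precisely when the series $\sum P_n^i$ converges, i.e.\ when $t=1$ lies inside (or on a controlled part of) the radius of convergence, equivalently when the denominator $(1-\frac{\theta}{4})(2-\psi)-1$ is positive so that the geometric-type decay of $P_n^i$ is guaranteed. I would therefore briefly verify that for $\theta\in[0,1]$ (recalling $\psi=\tfrac12-\tfrac\theta4$) the game terminates with probability one and the capture-time series converges, so that Abel's theorem (or direct summation of the recursion) licenses the evaluation at $t=1$. This positivity/decay check is the main—indeed essentially the only—step requiring care; once it is in place, the formulas follow by pure substitution.

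I expect the hardest part to be purely expository rather than mathematical: making sure the identification ``$P_i(1)=\EE^i[X]$'' is stated cleanly, since it silently combines Lemma~\ref{lemma} (expected time as a sum of survival probabilities) with the generating-function bookkeeping of the previous theorem, and it relies on the convergence of the series at the boundary point $t=1$. Everything after that is a one-line substitution, so I would keep the proof short, cite Lemma~\ref{lemma} and the preceding theorem, note the convergence, and evaluate.
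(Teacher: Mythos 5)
Your proposal is correct and matches the paper's argument exactly: the paper likewise obtains the corollary by observing that $P_i(1)=\sum_{n\ge 0}P_n^i$ is the expected number of rounds (via Lemma~\ref{lemma}) and substituting $t=1$ into the closed forms for $P_1(t)$ and $P_2(t)$. Your added remark about checking convergence of the series at $t=1$ is a point of care the paper omits, but it does not change the route.
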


\section{A different probability question}\label{sec:new probability}
The technique presented in the previous section also lends itself well when considering a variant of the question of interest:
\begin{center}
    \emph{What is the probability that the robber remains free at time 
$m$ when\\ the cop and robber end in Position $i$ on graph $G$ at time $m$?
}
\end{center}

We now answer this question in the case of friendship graphs.

\subsection{Friendship graphs}\label{sec:friendship}
A friendship graph, denoted by $F_k$, is a planar undirected graph consisting of $k$ triangular cliques sharing one common vertex. Hence $F_k$ has $2k + 1$ vertices and $3k$ edges. Figure \ref{fig:samplefriendshipgraphs} gives the friendship graphs $F_2$, $F_3$, and $F_4$.
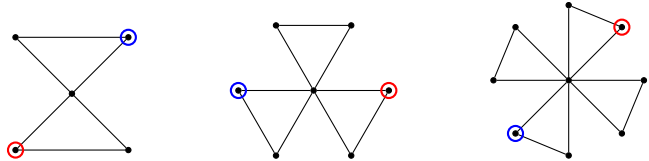
\begin{figure}[h]
\centering
\begin{tikzpicture}
\vertex[fill](a1) at (.75,.75) {};
\vertex[fill](a2) at (-.75,.75) {};
\vertex[fill](a3) at (-.75,-.75) {};
\vertex[fill](a4) at (.75,-.75) {};
\vertex[fill](a5) at (0,0) {};
\draw (a1)--(a2) ;
\draw (a2)--(a4);
\draw (a3)--(a1);
\draw (a3)--(a4) ;
\draw[blue,thick] (a1) circle (.1cm);
\draw[red,thick] (a3) circle (.1cm);
\end{tikzpicture}
\hspace{1cm}
\begin{tikzpicture}
\vertex[fill](a1) at (.5000000000, .8660254040) {};
\vertex[fill](a2) at (-.5000000000, .8660254040) {};
\vertex[fill](a3) at (-1., 0.) {};
\vertex[fill](a4) at (-.5000000000, -.8660254040) {};
\vertex[fill](a5) at (.5000000000, -.8660254040) {};
\vertex[fill](a6) at (1., 0.) {};
\vertex[fill](a0) at (0., 0.) {};
\draw (a1)--(a2) ;
\draw (a0)--(a2) ;
\draw (a3)--(a4) ;
\draw (a0)--(a4) ;
\draw (a3)--(a0) ;
\draw (a0)--(a1) ;
\draw (a5)--(a6);
\draw (a5)--(a0);
\draw (a0)--(a6);
\draw[blue,thick] (-1,0) circle (.1cm);
\draw[red,thick] (1,0) circle (.1cm);
\end{tikzpicture}
\hspace{1cm}
\begin{tikzpicture}
\vertex[fill](a0) at (0., 0.) {};
\vertex[fill](a1) at (.707107, .707107) {};
\vertex[fill](a2) at (0,1) {};
\vertex[fill](a3) at (-0.707107,0.707107) {};
\vertex[fill](a4) at (-1,0) {};
\vertex[fill](a5) at (-0.707107,-0.707107) {};
\vertex[fill](a6) at (0,-1) {};
\vertex[fill](a7) at (0.707107,-0.707107) {};
\vertex[fill](a8) at (1,0) {};
\draw (a0)--(a1);
\draw (a0)--(a2);
\draw (a0)--(a3);
\draw (a0)--(a4);
\draw (a0)--(a5);
\draw (a0)--(a6);
\draw (a0)--(a7);
\draw (a0)--(a8);
\draw (a1)--(a2);
\draw (a3)--(a4);
\draw (a5)--(a6);
\draw (a7)--(a8);
\draw[blue,thick] (-0.707107,-0.707107) circle (.1cm);
\draw[red,thick] (0.707107,0.707107) circle (.1cm);
\end{tikzpicture}
\captionof{figure}{Examples of friendship graphs ($F_k$ for $k=2,3,4$) with cop (blue) and robber (red).}
    \label{fig:samplefriendshipgraphs}
\end{figure}
There are always only four  possible configurations for the cop and robber (up to isomorphism) on the friendship graph. Specifically, the cop and robber can both be on the outside edge of the same component, one or the other can be at the central vertex, or they can be on separate components. We illustrate these cases in Positions \ref{fset1}-\ref{fset4} below.\\

\makeatletter 
\renewcommand{\theposition}{\@arabic\c@position}
\makeatother
\setcounter{position}{0}

\begin{minipage}{\textwidth}
\begin{minipage}[b]{0.2\textwidth}
\centering
\resizebox{.5\textwidth}{!}{
\begin{tikzpicture}
\vertex[fill](a0) at (0,0) {};
\vertex[fill](b1) at (-1,.5) {};
\vertex[fill](b2) at (-1,-.5) {};
\vertex[fill](c1) at (1,.5) {};
\vertex[fill](c2) at (1,-.5) {};
\draw (b1)--(a0) (b1)--(b2) (b2)--(a0);
\draw (c1)--(a0) (c1)--(c2) (c2)--(a0);
\draw[blue,thick] (-1,.5) circle (.1cm);
\draw[red,thick] (-1,-.5) circle (.1cm);
\end{tikzpicture}}
\captionof{position}{}\label{fset1}
\end{minipage}
\hspace{5mm}
\begin{minipage}[b]{0.2\textwidth}
\centering
\resizebox{.5\textwidth}{!}{
\begin{tikzpicture}
\vertex[fill](a0) at (0,0) {};
\vertex[fill](b1) at (-1,.5) {};
\vertex[fill](b2) at (-1,-.5) {};
\vertex[fill](c1) at (1,.5) {};
\vertex[fill](c2) at (1,-.5) {};
\draw (b1)--(a0) (b1)--(b2) (b2)--(a0);
\draw (c1)--(a0) (c1)--(c2) (c2)--(a0);
\draw[blue,thick] (-1,.5) circle (.1cm);
\draw[red,thick] (1,.5) circle (.1cm);
\end{tikzpicture}}
\captionof{position}{}\label{fset2}
\end{minipage}
\hspace{5mm}
\begin{minipage}[b]{0.2\textwidth}
\centering
%\resizebox{.7\textwidth}{!}{
\resizebox{.5\textwidth}{!}{
\begin{tikzpicture}
\vertex[fill](a0) at (0,0) {};
\vertex[fill](b1) at (-1,.5) {};
\vertex[fill](b2) at (-1,-.5) {};
\vertex[fill](c1) at (1,.5) {};
\vertex[fill](c2) at (1,-.5) {};
\draw (b1)--(a0) (b1)--(b2) (b2)--(a0);
\draw (c1)--(a0) (c1)--(c2) (c2)--(a0);
\draw[blue,thick] (-1,.5) circle (.1cm);
\draw[red,thick] (0,0) circle (.1cm);
\end{tikzpicture}}
\captionof{position}{}\label{fset3}
\end{minipage}
\hspace{5mm}
\begin{minipage}[b]{0.2\textwidth}
\centering
\resizebox{.5\textwidth}{!}{
\begin{tikzpicture}
\vertex[fill](a0) at (0,0) {};
\vertex[fill](b1) at (-1,.5) {};
\vertex[fill](b2) at (-1,-.5) {};
\vertex[fill](c1) at (1,.5) {};
\vertex[fill](c2) at (1,-.5) {};
\draw (b1)--(a0) (b1)--(b2) (b2)--(a0);
\draw (c1)--(a0) (c1)--(c2) (c2)--(a0);
\draw[red,thick] (-1,.5) circle (.1cm);
\draw[blue,thick] (0,0) circle (.1cm);
\end{tikzpicture}}
\captionof{position}{}\label{fset4}
\end{minipage}
\end{minipage}

\vspace{12pt}

We let $\PP_m^i(F_k)$ with $1 \leq i \leq 4$ denote the probability that the robber is still free after $m$ time steps when the cop and robber end in Position $i$ at time $m$ on graph $F_k$.  The remainder of this section is dedicated to calculating the values of $\PP_m^i(F_k)$ with $1 \leq i \leq 4$.

\begin{theorem}\label{thm:FriendshipRecursive}Let $1\leq i\leq 4$. If $\theta$ denotes the proportion of the cop's movement that is random, then the values of $\PP_m^i (F_k)$ are described by the following recursive formulas.\\
For odd values of $m$
\begin{align}
\PP^1_m(F_k)&=\left(\frac{1}{2k}\right)\theta \PP^4_{m-1}(F_k)\\
\PP^2_m(F_k)&=\left(\frac{1}{2}\right)\theta \PP^2_{m-1}(F_k)+\left(\frac{2k-2}{2k}\right)\theta \PP^4_{m-1}(F_k)\\
\PP^3_m(F_k)&=\left(\frac{1}{2}\right)\theta \PP^3_{m-1}(F_k)\\
\PP^4_m(F_k)&=\left(\frac{1}{2}\right)\theta \PP^1_{m-1}(F_k)+\left(1-\frac{1}{2}\theta\right)\PP^2_{m-1}(F_k)
\end{align}
and for even values of $m$
\begin{align}
\PP^1_m(F_k)&=\left(\frac{1}{2k}\right)\PP^3_{m-1}(F_k)\\
\PP^2_m(F_k)&=\left(\frac{1}{2}\right) \PP^2_{m-1}(F_k)+\left(\frac{2k-2}{2k}\right)\PP^3_{m-1}(F_k).\\
\PP^3_m(F_k)&=\frac{1}{2}\left(\PP^1_{m-1}(F_k)+\frac{1}{2}\PP^2_{m-1}(F_k)\right)\\
\PP^4_m(F_k)&=\frac{1}{2}\PP^4_{m-1}(F_k).
\end{align}
\end{theorem}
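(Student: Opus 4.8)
The plan is to treat the four configurations, together with the absorbing ``captured'' state, as the states of a time-inhomogeneous Markov chain and to read off the two one-step transition operators---one for a robber move and one for a tipsy-cop move---directly from probability trees, exactly as in the cycle-graph analysis. Since $\PP_m^i(F_k)$ records the joint event that the robber is still free \emph{and} the configuration is Position $i$ after $m$ moves, each recursion simply collects, over the states $j$ at time $m-1$, the product of $\PP_{m-1}^j(F_k)$ with the probability that the relevant move carries Position $j$ to Position $i$ without a capture; every branch that ends the game is discarded. Because the robber and cop alternate, the operator used at step $m$ depends on the parity of $m$, which is precisely why $\theta$ (a cop parameter) decorates one family of recursions and is absent from the other.

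First I would fix the structural facts that drive every transition: each outer vertex has degree $2$ (its triangle-partner and the hub), while the central vertex has degree $2k$. Then, from each of Positions 1--4, I would compute a single robber move. For example, from Position 1 the robber (on an outer vertex) reaches the hub with probability $\tfrac12$, landing in Position 3, and steps onto the cop's vertex with probability $\tfrac12$, which is a capture; from Position 4 (robber outer, cop at the hub) a move to the hub is a capture while a move to the partner keeps Position 4, giving the factor $\tfrac12$; and from Position 3 (robber at the hub) the $2k$ equally likely choices split into one capture, the one vertex completing the cop's triangle (Position 1, weight $\tfrac{1}{2k}$), and the remaining $2k-2$ vertices lying in other triangles (Position 2, weight $\tfrac{2k-2}{2k}$). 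Collecting these yields the $\theta$-free recursions.

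Next I would repeat the computation for a single cop move, where the tipsy rule splits each step into a random part (probability $\theta$, uniform over neighbors) and a directed part (probability $1-\theta$, a step along a shortest path toward the robber, which captures whenever the robber is adjacent). The one genuinely non-obvious directed move is from Position 2: with the players in different triangles the unique shortest path runs through the hub, so the directed cop move lands at the center, i.e.\ in Position 4; combined with the random contribution this produces the weight $1-\tfrac{\theta}{2}$ on the $\PP^2 \to \PP^4$ transition, while from the hub the random part again branches as $\tfrac{1}{2k}$ versus $\tfrac{2k-2}{2k}$ into Positions 1 and 2. Because a directed step captures from every position in which the players are adjacent, exactly these cop recursions carry the factor $\theta$, which matches the odd/even split in the statement.

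The main obstacle is bookkeeping the asymmetry created by the single high-degree hub: unlike the vertex-transitive graphs handled earlier, here ``leaving the center'' must be separated into the one move that re-enters the current triangle and the $2k-2$ moves that reach a new triangle, and one must track whether a directed cop step reduces distance by entering the hub or by capturing. The delicate points are the two branches out of the hub---the $\tfrac{1}{2k}$ versus $\tfrac{2k-2}{2k}$ split feeding Positions 1 and 2---and the contributions \emph{into} Position 3 from both Position 1 and Position 2, each of which is easy to mis-weight. Once every branch of the two probability trees is accounted for and the capture branches are dropped, reading off the coefficients gives the stated recursions for odd and even $m$.
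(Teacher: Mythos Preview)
Your proposal is correct and mirrors the paper's argument: both perform a case analysis on the four positions and the parity of $m$, computing for each target position $i$ the one-step transition probabilities from every predecessor $j$ (a cop move, carrying the factor $\theta$, when $m$ is odd; a robber move when $m$ is even) and discarding the capture branches. Your Markov-chain language and forward-transition bookkeeping are a cosmetic repackaging of the paper's backward ``which positions at time $m-1$ could have led to Position $i$'' analysis, but the underlying computations---including the $\tfrac{1}{2k}$ versus $\tfrac{2k-2}{2k}$ split out of the hub and the $1-\tfrac{\theta}{2}$ weight on the directed cop step from Position~2---are identical.
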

\begin{proof}

We begin by considering the case where it is the robber's turn to move (that is the case where $m$ is odd).  We proceed by case analysis on whether $m$ is odd or even.\\

\noindent
{\bf Case 1 ($m$ odd):} There are four positions we must consider individually. However, in all of these cases as $m$ is odd, we note at time $m-1$ the cop made a move. We proceed with a case-by-case analysis.
\begin{enumerate}

\item[1.] At time $m$ the cop and robber are in Position \ref{fset1}. Figure \ref{odd-conf1} gives the possible location where the cop was at time $m-1$ given that at time $m$ the cop and robber are in Position \ref{fset1}.  Hence, the probability that the robber remains free at time $m$ is given by the product of two probabilities: The first is the probability that the robber was free at time $m-1$ in Position \ref{fset4}, which is given by $\PP^4_{m-1}(F_k)$. The second probability is the probability that the cop did not catch the robber in her move at time $m-1$, which means that the cop moved randomly with probability $\theta$ times the probability of selecting one of the $2k$ available vertices, so that the cop and robber are in Position \ref{fset1} at time $m$. Thus $\PP^1_{m}(F_k)=\left(\frac{1}{2k}\right)\theta \PP^4_{m-1}(F_k)$.

\begin{figure}[h]
\centering
\begin{tikzpicture}
\node at (0,1.5) {time $m-1$};
\vertex[fill,label=above:{\small C}](a0) at (0,0) {};
\vertex[fill,label=above:{\small R}](b1) at (-1,.5) {};
\vertex[fill](b2) at (-1,-.5) {};
\vertex[fill](c1) at (1,.5) {};
\vertex[fill](c2) at (1,-.5) {};
\draw (b1)--(a0) (b1)--(b2) (b2)--(a0);
\draw (c1)--(a0) (c1)--(c2) (c2)--(a0);
\draw[red,thick] (-1,.5) circle (.1cm);
\draw[blue,thick] (0,0) circle (.1cm);
\node at (4,1.5) {time $m$};
\vertex[fill](a01) at (4,0) {};
\vertex[fill,label=above:{\small C}](b11) at (3,.5) {};
\vertex[fill,label=below:{\small R}](b21) at (3,-.5) {};
\vertex[fill](c11) at (5,.5) {};
\vertex[fill](c21) at (5,-.5) {};
\draw (b11)--(a01) (b11)--(b21) (b21)--(a01);
\draw (c11)--(a01) (c11)--(c21) (c21)--(a01);
\draw[blue,thick] (3,.5) circle (.1cm);
\draw[red,thick] (3,-.5) circle (.1cm);
\draw[thick,->] (1.5,0) -- (2.5,0) node[anchor=north west] {};
\end{tikzpicture}
\caption{Position \ref{fset4} at even time $m-1$ to Position \ref{fset1} at odd time $m$.}\label{odd-conf1}
\end{figure}
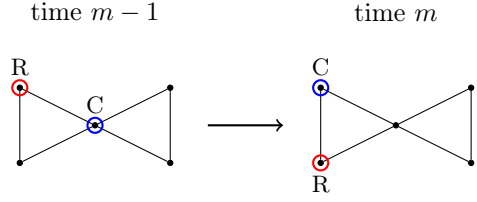

\item[2.] At time $m$ the cop and robber are in Position \ref{fset2}. Figure \ref{odd-conf2} gives the possible locations where the cop was at time $m-1$ given that at time $m$ the cop and robber are in Position \ref{fset2}.  Hence, the probability that the robber remains free at time $m$ is given by the product of two probabilities for each of the possible positions at time $m-1$. \\

If at time $m-1$ the cop and robber were at Position \ref{fset2}, the first probability in the first product is the probability that the robber was free at time $m-1$ in Position \ref{fset2}, which is given by $\PP^2_{m-1}(F_k)$. The second probability in the first product is the probability that the cop did not catch the robber in their move at time $m-1$, which means that the cop moved randomly with probability $\theta$ times the probability of moving to the other vertex in the same component, which yields $\frac{1}{2}\theta$. Thus the first probability is given by $\left(\frac{1}{2}\right)\theta \PP^2_{m-1}(F_k)$. \\

For the second product we begin with Position \ref{fset4} at time $m-1$. The first probability in the second product is the probability that the robber was free at time $m-1$ in Position \ref{fset4}, which is given by $\PP^4_{m-1}(F_k)$. The second probability in the second product is the probability that the cop did not catch the robber in his move at time $m-1$, which means that the cop moved randomly with probability $\theta$ times the probability of moving to any other vertex on the graph with the exception of the available vertex in the robber's component, which yields $\frac{2k-2}{2k}\theta$. Thus the second probability is given by $\left(\frac{2k-2}{2k}\right)\theta \PP^4_{m-1}(F_k)$. \\

Finally we observe that at time $m-1$ only one of the two possible positions can occur, therefore
\[\PP^2_m(F_k)=\left(\frac{1}{2}\right)\theta \PP^2_{m-1}(F_k)+\left(\frac{2k-2}{2k}\right)\theta \PP^4_{m-1}(F_k).\]

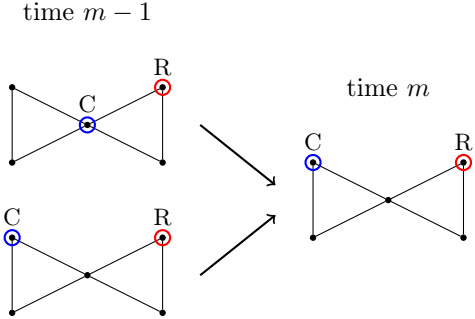
\begin{figure}[h]
\centering
\begin{tikzpicture}
%TOP Left picture
\node at (0,2.5) {time $m-1$};
\vertex[fill,label=above:{\small C}](a0) at (0,1) {};%center
\vertex[fill](b1) at (-1,1.5) {};%left top
\vertex[fill](b2) at (-1,.5) {};%left bottom
\vertex[fill,label=above:{\small R}](c1) at (1,1.5) {};%right top
\vertex[fill](c2) at (1,.5) {};%right bottom
\draw (b1)--(a0) (b1)--(b2) (b2)--(a0);
\draw (c1)--(a0) (c1)--(c2) (c2)--(a0);
\draw[blue,thick] (a0) circle (.1cm);%cop
\draw[red,thick] (c1) circle (.1cm);%robber
%BOTTOM Left picture
\vertex[fill](a02) at (0,-1) {};%center
\vertex[fill,label=above:{\small C}](b12) at (-1,-.5) {};%left top
\vertex[fill](b22) at (-1,-1.5) {};%left bottom
\vertex[fill,label=above:{\small R}](c12) at (1,-.5) {};%right top
\vertex[fill](c22) at (1,-1.5) {};%right bottom
\draw (b12)--(a02) (b12)--(b22) (b22)--(a02);
\draw (c12)--(a02) (c12)--(c22) (c22)--(a02);
\draw[blue,thick] (b12) circle (.1cm);%cop
\draw[red,thick] (c12) circle (.1cm);%robber
%Right picture
\node at (4,1.5) {time $m$};
\vertex[fill](a01) at (4,0) {};%center dot
\vertex[fill,label=above:{\small C}](b11) at (3,.5) {};%left top
\vertex[fill](b21) at (3,-.5) {};%left bottom
\vertex[fill,label=above:{\small R}](c11) at (5,.5) {};%right top
\vertex[fill](c21) at (5,-.5) {};%right bottom
\draw (b11)--(a01) (b11)--(b21) (b21)--(a01);
\draw (c11)--(a01) (c11)--(c21) (c21)--(a01);
\draw[blue,thick] (b11) circle (.1cm);%cop
\draw[red,thick] (c11) circle (.1cm);%robber
%Arrow
\draw[thick,->] (1.5,1) -- (2.5,.2) node[anchor=north west] {};
\draw[thick,->] (1.5,-1) -- (2.5,-.2) node[anchor=north west] {};
\end{tikzpicture}
\caption{Positions \ref{fset2} and \ref{fset4} at even time $m-1$ to Position \ref{fset2} at odd time $m$.}\label{odd-conf2}
\end{figure}

\item[3.] At time $m$ the cop and robber are in Position \ref{fset3}. Figure \ref{odd-conf3} gives the possible location where the cop was at time $m-1$ given that at time $m$ the cop and robber are in Position \ref{fset3}.  Hence, the probability that the robber remains free at time $m$ is given by the product of two probabilities: The first is the probability that the robber was free at time $m-1$ in Position \ref{fset3}, which is given by $\PP^3_{m-1}(F_k)$. The second probability is the probability that the cop did not catch the robber in her move at time $m-1$, which means that the cop moved randomly with probability $\theta$ times the probability of moving to the other vertex in the same component, which yields $\frac{1}{2}\theta$. Thus $\PP^3_{m}(F_k)=\left(\frac{1}{2}\right)\theta \PP^3_{m-1}(F_k)$.

\begin{figure}[h]
\centering
\begin{tikzpicture}
%Left picture
\node at (0,1.5) {time $m-1$};
\vertex[fill,label=above:{\small R}](a0) at (0,0) {};%center
\vertex[fill,label=above:{\small C}](b1) at (-1,.5) {};%left top
\vertex[fill](b2) at (-1,-.5) {};%left bottom
\vertex[fill](c1) at (1,.5) {};%right top
\vertex[fill](c2) at (1,-.5) {};%right bottom
\draw (b1)--(a0) (b1)--(b2) (b2)--(a0);
\draw (c1)--(a0) (c1)--(c2) (c2)--(a0);
\draw[blue,thick] (b1) circle (.1cm);%cop
\draw[red,thick] (a0) circle (.1cm);%robber
%Right picture
\node at (4,1.5) {time $m$};
\vertex[fill,label=below:{\small R}](a01) at (4,0) {};%center dot
\vertex[fill,label=above:{\small C}](b11) at (3,.5) {};%left top
\vertex[fill](b21) at (3,-.5) {};%left bottom
\vertex[fill](c11) at (5,.5) {};%right top
\vertex[fill](c21) at (5,-.5) {};%right bottom
\draw (b11)--(a01) (b11)--(b21) (b21)--(a01);
\draw (c11)--(a01) (c11)--(c21) (c21)--(a01);
\draw[blue,thick] (b11) circle (.1cm);%cop
\draw[red,thick] (a01) circle (.1cm);%robber
\draw[thick,->] (1.5,0) -- (2.5,0) node[anchor=north west] {};
\end{tikzpicture}
\caption{Position \ref{fset3} at even time $m-1$ to Position \ref{fset3} at odd time $m$.}\label{odd-conf3}
\end{figure}
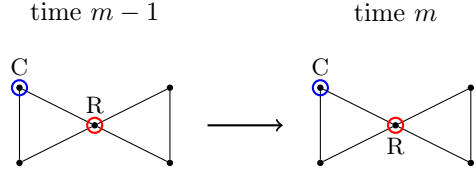

\item[4.] At time $m$ the cop and robber are in Position \ref{fset4}. Figure \ref{odd-conf4} gives the possible locations where the cop was at time $m-1$ given that at time $m$ the cop and robber are in Position \ref{fset4}.  Hence, the probability that the robber remains free at time $m$ is given by the product of two probabilities for each of the possible positions at time $m-1$. \\

If at time $m-1$ the cop and robber were at Position \ref{fset1}, the first probability in the first product is the probability that the robber was free at time $m-1$ in Position \ref{fset1}, which is given by $\PP^1_{m-1}(F_k)$. The second probability in the first product is the probability that the cop did not catch the robber in his move at time $m-1$, which means that the cop moved randomly with probability $\theta$ times the probability of moving to the center vertex of the graph, which yields $\frac{1}{2}\theta$. Thus the first probability is given by $\left(\frac{1}{2}\right)\theta \PP^1_{m-1}(F_k)$. \\

For the second product we begin with Position \ref{fset2} at time $m-1$.The first probability in the second product is the probability that the robber was free at time $m-1$ in Position \ref{fset2}, which is given by $\PP^2_{m-1}(F_k)$. The second probability in the second product is the probability that the cop did not catch the robber in their move at time $m-1$, which means that the cop moved either randomly with probability $\theta$ times the probability of moving to the center vertex on the graph, or the probability of moving toward the robber (since the shortest path between the cop and the robber goes through the center vertex of the graph), which yields $\frac{1}{2}\theta+(1-\theta)$. Thus the second probability is given by $(1-\frac{1}{2}\theta)\PP^2_{m-1}(F_k)$. \\

Finally we observe that at time $m-1$ only one of the two possible positions can occur, therefore
\[\PP^4_m(F_k)=\left(\frac{1}{2}\right)\theta \PP^1_{m-1}(F_k)+\left(1-\frac{1}{2}\theta\right)\PP^2_{m-1}(F_k).\]

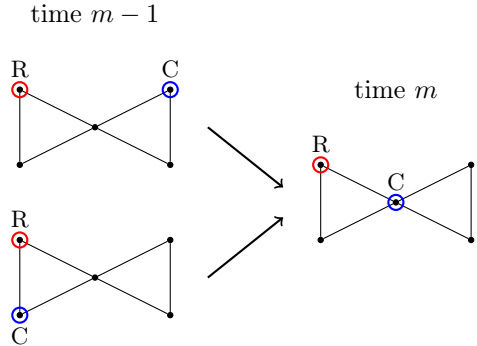
\begin{figure}[h!]
\centering
\begin{tikzpicture}
%TOP Left picture
\node at (0,2.5) {time $m-1$};
\vertex[fill](a0) at (0,1) {};%center
\vertex[fill,label=above:{\small R}](b1) at (-1,1.5) {};%left top
\vertex[fill](b2) at (-1,.5) {};%left bottom
\vertex[fill,label=above:{\small C}](c1) at (1,1.5) {};%right top
\vertex[fill](c2) at (1,.5) {};%right bottom
\draw (b1)--(a0) (b1)--(b2) (b2)--(a0);
\draw (c1)--(a0) (c1)--(c2) (c2)--(a0);
\draw[blue,thick] (c1) circle (.1cm);%cop
\draw[red,thick] (b1) circle (.1cm);%robber
%BOTTOM Left picture
\vertex[fill](a02) at (0,-1) {};%center
\vertex[fill,label=above:{\small R}](b12) at (-1,-.5) {};%left top
\vertex[fill,label=below:{\small C}](b22) at (-1,-1.5) {};%left bottom
\vertex[fill](c12) at (1,-.5) {};%right top
\vertex[fill](c22) at (1,-1.5) {};%right bottom
\draw (b12)--(a02) (b12)--(b22) (b22)--(a02);
\draw (c12)--(a02) (c12)--(c22) (c22)--(a02);
\draw[blue,thick] (b22) circle (.1cm);%cop
\draw[red,thick] (b12) circle (.1cm);%robber
%Right picture
\node at (4,1.5) {time $m$};
\vertex[fill,label=above:{\small C}](a01) at (4,0) {};%center dot
\vertex[fill,label=above:{\small R}](b11) at (3,.5) {};%left top
\vertex[fill](b21) at (3,-.5) {};%left bottom
\vertex[fill](c11) at (5,.5) {};%right top
\vertex[fill](c21) at (5,-.5) {};%right bottom
\draw (b11)--(a01) (b11)--(b21) (b21)--(a01);
\draw (c11)--(a01) (c11)--(c21) (c21)--(a01);
\draw[blue,thick] (a01) circle (.1cm);%cop
\draw[red,thick] (b11) circle (.1cm);%robber
%Arrow
\draw[thick,->] (1.5,1) -- (2.5,.2) node[anchor=north west] {};
\draw[thick,->] (1.5,-1) -- (2.5,-.2) node[anchor=north west] {};
\end{tikzpicture}
\caption{Positions \ref{fset1} and \ref{fset2} at even time $m-1$ to Position \ref{fset4} at odd time $m$.}\label{odd-conf4}
\end{figure}

\end{enumerate}

\noindent
{\bf Case 2 ($m$ even):} There are four positions we must consider individually. However, in all of these cases as $m$ is even, we note at time $m-1$ the robber made a move. We proceed with a case-by-case analysis.
\begin{enumerate}

\item[1.] At time $m$ the cop and robber are in Position \ref{fset1}. Figure \ref{even-conf1} gives the possible location where the robber was at time $m-1$ given that at time $m$ the cop and robber are in Position \ref{fset1}.  Hence, the probability that the robber remains free at time $m$ is given by the product of two probabilities: The first is the probability that the robber was free at time $m-1$ in Position \ref{fset3}, which is given by $\PP^3_{m-1}(F_k)$. The second probability is the probability that the robber did not move to the vertex occupied by the cop in his move at time $m-1$, which means that the robber moved to the other vertex in the same component as the cop, so that the cop and robber are in Position \ref{fset1} at time $m$. Thus $\PP^1_{m}(F_k)=\left(\frac{1}{2k}\right) \PP^3_{m-1}(F_k)$.

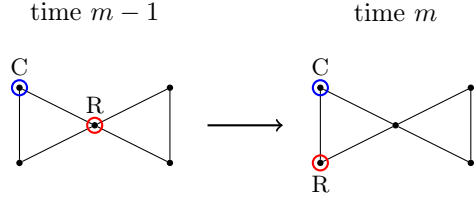
\begin{figure}[h]
\centering
\begin{tikzpicture}
%Left picture
\node at (0,1.5) {time $m-1$};
\vertex[fill,label=above:{\small R}](a0) at (0,0) {};%center
\vertex[fill,label=above:{\small C}](b1) at (-1,.5) {};%left top
\vertex[fill](b2) at (-1,-.5) {};%left bottom
\vertex[fill](c1) at (1,.5) {};%right top
\vertex[fill](c2) at (1,-.5) {};%right bottom
\draw (b1)--(a0) (b1)--(b2) (b2)--(a0);
\draw (c1)--(a0) (c1)--(c2) (c2)--(a0);
\draw[blue,thick] (b1) circle (.1cm);%cop
\draw[red,thick] (a0) circle (.1cm);%robber
%Right picture
\node at (4,1.5) {time $m$};
\vertex[fill](a01) at (4,0) {};%center dot
\vertex[fill,label=above:{\small C}](b11) at (3,.5) {};%left top
\vertex[fill,label=below:{\small R}](b21) at (3,-.5) {};%left bottom
\vertex[fill](c11) at (5,.5) {};%right top
\vertex[fill](c21) at (5,-.5) {};%right bottom
\draw (b11)--(a01) (b11)--(b21) (b21)--(a01);
\draw (c11)--(a01) (c11)--(c21) (c21)--(a01);
\draw[blue,thick] (b11) circle (.1cm);%cop
\draw[red,thick] (b21) circle (.1cm);%robber
\draw[thick,->] (1.5,0) -- (2.5,0) node[anchor=north west] {};
\end{tikzpicture}
\caption{Position \ref{fset1} at even time $m$.}\label{even-conf1}
\end{figure}

\item[2.] At time $m$ the cop and robber are in Position \ref{fset2}. Figure \ref{even-conf2} gives the possible locations where the robber was at time $m-1$ given that at time $m$ the cop and robber are in Position \ref{fset2}.  Hence, the probability that the robber remains free at time $m$ is given by the product of two probabilities for each of the possible positions at time $m-1$. 

If at time $m-1$ the cop and robber were at Position \ref{fset2}, the first probability in the first product is the probability that the robber was free at time $m-1$ in Position \ref{fset2}, which is given by $\PP^2_{m-1}(F_k)$. The second probability in the first product is the probability that the robber did not move to the vertex occupied by the cop in his move at time $m-1$, which means that the robber moved to the other vertex in the same component, which yields $\frac{1}{2}$. Thus the first probability is given by $\left(\frac{1}{2}\right) \PP^2_{m-1}(F_k)$.

For the second product we begin with Position \ref{fset3} at time $m-1$. The first probability in the second product is the probability that the robber was free at time $m-1$ in Position \ref{fset3}, which is given by $\PP^3_{m-1}(F_k)$. The second probability in the second product is the probability that the robber did not move to the vertex occupied by the cop in his move at time $m-1$, which means that the robber moved to any of the other $2k-2$ vertex from $2k$ neighboring vertices. Thus the second probability is given by $\left(\frac{2k-2}{2k}\right)\PP^3_{m-1}(F_k)$. 

Finally we observe that at time $m-1$ only one of the two possible positions can occur, therefore
\[\PP^2_m(F_k)=\left(\frac{1}{2}\right) \PP^2_{m-1}(F_k)+\left(\frac{2k-2}{2k}\right)\PP^3_{m-1}(F_k).\]

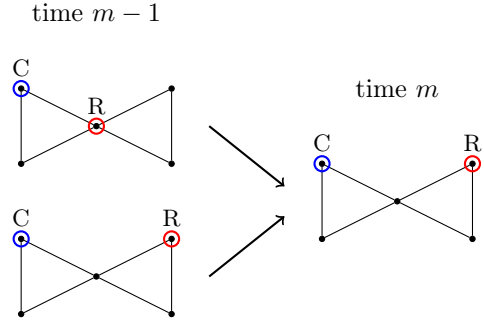
\begin{figure}[h!]
\centering
\begin{tikzpicture}
%TOP Left picture
\node at (0,2.5) {time $m-1$};
\vertex[fill,label=above:{\small R}](a0) at (0,1) {};%center
\vertex[fill,label=above:{\small C}](b1) at (-1,1.5) {};%left top
\vertex[fill](b2) at (-1,.5) {};%left bottom
\vertex[fill](c1) at (1,1.5) {};%right top
\vertex[fill](c2) at (1,.5) {};%right bottom
\draw (b1)--(a0) (b1)--(b2) (b2)--(a0);
\draw (c1)--(a0) (c1)--(c2) (c2)--(a0);
\draw[blue,thick] (b1) circle (.1cm);%cop
\draw[red,thick] (a0) circle (.1cm);%robber
%BOTTOM Left picture
\vertex[fill](a02) at (0,-1) {};%center
\vertex[fill,label=above:{\small C}](b12) at (-1,-.5) {};%left top
\vertex[fill](b22) at (-1,-1.5) {};%left bottom
\vertex[fill,label=above:{\small R}](c12) at (1,-.5) {};%right top
\vertex[fill](c22) at (1,-1.5) {};%right bottom
\draw (b12)--(a02) (b12)--(b22) (b22)--(a02);
\draw (c12)--(a02) (c12)--(c22) (c22)--(a02);
\draw[blue,thick] (b12) circle (.1cm);%cop
\draw[red,thick] (c12) circle (.1cm);%robber
%Right picture
\node at (4,1.5) {time $m$};
\vertex[fill](a01) at (4,0) {};%center dot
\vertex[fill,label=above:{\small C}](b11) at (3,.5) {};%left top
\vertex[fill](b21) at (3,-.5) {};%left bottom
\vertex[fill,label=above:{\small R}](c11) at (5,.5) {};%right top
\vertex[fill](c21) at (5,-.5) {};%right bottom
\draw (b11)--(a01) (b11)--(b21) (b21)--(a01);
\draw (c11)--(a01) (c11)--(c21) (c21)--(a01);
\draw[blue,thick] (b11) circle (.1cm);%cop
\draw[red,thick] (c11) circle (.1cm);%robber
%Arrow
\draw[thick,->] (1.5,1) -- (2.5,.2) node[anchor=north west] {};
\draw[thick,->] (1.5,-1) -- (2.5,-.2) node[anchor=north west] {};
\end{tikzpicture}
\caption{Position \ref{fset2} at even time $m$ .}\label{even-conf2}
\end{figure}

\item[3.] At time $m$ the cop and robber are in Position \ref{fset3}. Figure \ref{even-conf3} gives the possible locations where the robber was at time $m-1$ given that at time $m$ the cop and robber are in Position \ref{fset3}.  Hence, the probability that the robber remains free at time $m$ is given by the product of two probabilities for each of the possible positions at time $m-1$. 

If at time $m-1$ the cop and robber were at Position \ref{fset1}, the first probability in the first product is the probability that the robber was free at time $m-1$ in Position \ref{fset1}, which is given by $\PP^1_{m-1}(F_k)$. The second probability in the first product is the probability that the robber did not move to the vertex occupied by the cop in his move at time $m-1$, which means that the robber moved to center vertex of the graph, which yields $\frac{1}{2}$. Thus the first probability is given by $\left(\frac{1}{2}\right) \PP^1_{m-1}(F_k)$.

For the second product we begin with Position \ref{fset2} at time $m-1$. The first probability in the second product is the probability that the robber was free at time $m-1$ in Position \ref{fset2}, which is given by $\PP^2_{m-1}(F_k)$. The second probability in the second product is the probability that the robber moved to the center vertex of the graph, this occurs with probability $\frac{1}{2}$. Thus the second probability is given by $\left(\frac{1}{2}\right)\PP^2_{m-1}(F_k)$. 

Finally we observe that at time $m-1$ only one of the two possible positions can occur, therefore
\[\PP^3_m(F_k)=\left(\frac{1}{2}\right) \PP^1_{m-1}(F_k)+\left(\frac{1}{2}\right)\PP^2_{m-1}(F_k).\]

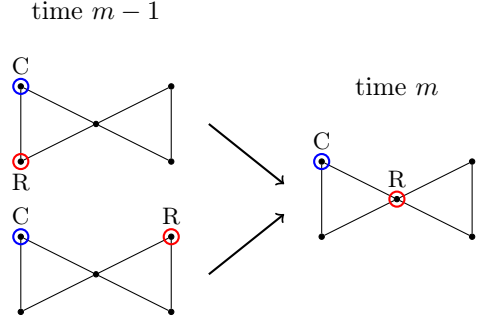
\begin{figure}[h!]
\centering
\begin{tikzpicture}
%TOP Left picture
\node at (0,2.5) {time $m-1$};
\vertex[fill](a0) at (0,1) {};%center
\vertex[fill,label=above:{\small C}](b1) at (-1,1.5) {};%left top
\vertex[fill,label=below:{\small R}](b2) at (-1,.5) {};%left bottom
\vertex[fill](c1) at (1,1.5) {};%right top
\vertex[fill](c2) at (1,.5) {};%right bottom
\draw (b1)--(a0) (b1)--(b2) (b2)--(a0);
\draw (c1)--(a0) (c1)--(c2) (c2)--(a0);
\draw[blue,thick] (b1) circle (.1cm);%cop
\draw[red,thick] (b2) circle (.1cm);%robber
%BOTTOM Left picture
\vertex[fill](a02) at (0,-1) {};%center
\vertex[fill,label=above:{\small C}](b12) at (-1,-.5) {};%left top
\vertex[fill](b22) at (-1,-1.5) {};%left bottom
\vertex[fill,label=above:{\small R}](c12) at (1,-.5) {};%right top
\vertex[fill](c22) at (1,-1.5) {};%right bottom
\draw (b12)--(a02) (b12)--(b22) (b22)--(a02);
\draw (c12)--(a02) (c12)--(c22) (c22)--(a02);
\draw[blue,thick] (b12) circle (.1cm);%cop
\draw[red,thick] (c12) circle (.1cm);%robber
%Right picture
\node at (4,1.5) {time $m$};
\vertex[fill,label=above:{\small R}](a01) at (4,0) {};%center dot
\vertex[fill,label=above:{\small C}](b11) at (3,.5) {};%left top
\vertex[fill](b21) at (3,-.5) {};%left bottom
\vertex[fill](c11) at (5,.5) {};%right top
\vertex[fill](c21) at (5,-.5) {};%right bottom
\draw (b11)--(a01) (b11)--(b21) (b21)--(a01);
\draw (c11)--(a01) (c11)--(c21) (c21)--(a01);
\draw[blue,thick] (b11) circle (.1cm);%cop
\draw[red,thick] (a01) circle (.1cm);%robber
%Arrow
\draw[thick,->] (1.5,1) -- (2.5,.2) node[anchor=north west] {};
\draw[thick,->] (1.5,-1) -- (2.5,-.2) node[anchor=north west] {};
\end{tikzpicture}
\caption{Position \ref{fset3} at even time $m$.}\label{even-conf3}
\end{figure}

\item[4.] At time $m$ the cop and robber are in Position \ref{fset4}. Figure \ref{even-conf4} gives the possible location where the robber was at time $m-1$ given that at time $m$ the cop and robber are in Position \ref{fset4}.  Hence, the probability that the robber remains free at time $m$ is given by the product of two probabilities: The first is the probability that the robber was free at time $m-1$ in Position \ref{fset4}, which is given by $\PP^4_{m-1}(F_k)$. The second probability is the probability that the robber did not move to the vertex occupied by the cop in his move at time $m-1$, which means that the robber moved to the other vertex in the same component he was occupying, so that the cop and robber are in Position \ref{fset4} at time $m$. Thus $\PP^4_{m}(F_k)=\left(\frac{1}{2}\right) \PP^4_{m-1}(F_k)$.

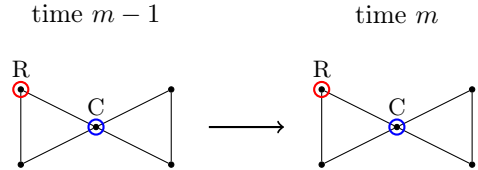
\begin{figure}[h]
\centering
\begin{tikzpicture}
%Left picture
\node at (0,1.5) {time $m-1$};
\vertex[fill,label=above:{\small C}](a0) at (0,0) {};%center
\vertex[fill,label=above:{\small R}](b1) at (-1,.5) {};%left top
\vertex[fill](b2) at (-1,-.5) {};%left bottom
\vertex[fill](c1) at (1,.5) {};%right top
\vertex[fill](c2) at (1,-.5) {};%right bottom
\draw (b1)--(a0) (b1)--(b2) (b2)--(a0);
\draw (c1)--(a0) (c1)--(c2) (c2)--(a0);
\draw[blue,thick] (a0) circle (.1cm);%cop
\draw[red,thick] (b1) circle (.1cm);%robber
%Right picture
\node at (4,1.5) {time $m$};
\vertex[fill,label=above:{\small C}](a01) at (4,0) {};%center dot
\vertex[fill,label=above:{\small R}](b11) at (3,.5) {};%left top
\vertex[fill](b21) at (3,-.5) {};%left bottom
\vertex[fill](c11) at (5,.5) {};%right top
\vertex[fill](c21) at (5,-.5) {};%right bottom
\draw (b11)--(a01) (b11)--(b21) (b21)--(a01);
\draw (c11)--(a01) (c11)--(c21) (c21)--(a01);
\draw[blue,thick] (a01) circle (.1cm);%cop
\draw[red,thick] (b11) circle (.1cm);%robber
\draw[thick,->] (1.5,0) -- (2.5,0) node[anchor=north west] {};
\end{tikzpicture}
\caption{Position \ref{fset4} at even time $m$.}\label{even-conf4}
\end{figure}
\qedhere
\end{enumerate}
\end{proof}

Using the recursive formulas of Theorem \ref{thm:FriendshipRecursive} we establish the following matrix formulas. 

\begin{theorem}\label{thm:M} Let $1\leq i\leq 4$. If $\theta$ denotes the proportion of the cop's movement that is random,
the the probability $P_m^i (F_k)$ is described by the following system of formulas in $k$.
If $m$ is odd, then 
\begin{align}
\begin{pmatrix}
\PP^1_m(F_k)\\
\PP^2_m(F_k)\\
\PP^3_m(F_k)\\
\PP^4_m(F_k)
\end{pmatrix}
&=\begin{pmatrix}
0&0&0&\left(\frac{1}{2k}\right)\theta\\
0&\left(\frac{1}{2}\right)\theta&0&\left(\frac{2k-2}{2k}\right)\theta\\
0&0&\left(\frac{1}{2}\right)\theta&0\\
\left(\frac{1}{2}\right)\theta&\left(1-\frac{1}{2}\theta\right)&0&0
\end{pmatrix}^m
\begin{pmatrix}
1\\
1\\
1\\
1
\end{pmatrix}.
\end{align}
If $m$ is even, then  
\begin{align}
\begin{pmatrix}
\PP^1_m(F_k)\\
\PP^2_m(F_k)\\
\PP^3_m(F_k)\\
\PP^4_m(F_k)
\end{pmatrix}
&=\begin{pmatrix}
0&0&\frac{1}{2k}&0\\
0&\frac{1}{2}&\frac{2k-2}{2k}&0\\
\frac{1}{2}&\frac{1}{4}&0&0\\
0&0&0&\frac{1}{2}
\end{pmatrix}^m
\begin{pmatrix}
1\\
1\\
1\\
1
\end{pmatrix}.
\end{align}

\end{theorem}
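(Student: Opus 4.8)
The plan is to observe that Theorem~\ref{thm:FriendshipRecursive} has already done all of the probabilistic work, so that what remains is purely linear algebra: repackaging the scalar recursions as a single matrix recursion and iterating it. Write $\mathbf{p}_m=(\PP^1_m(F_k),\PP^2_m(F_k),\PP^3_m(F_k),\PP^4_m(F_k))^{T}$ for the column vector of the four survival probabilities at time $m$, and $\mathbf{1}=(1,1,1,1)^{T}$. Each of the eight formulas in Theorem~\ref{thm:FriendshipRecursive} is linear and homogeneous in the probabilities $\PP^j_{m-1}(F_k)$, so the first step is to read off, coefficient by coefficient, the two transition matrices. The four odd-$m$ formulas are precisely the rows of $M_{\mathrm{odd}}$ (for instance, its $(2,4)$ entry is the coefficient $\frac{2k-2}{2k}\theta$ of $\PP^4_{m-1}$ in the formula for $\PP^2_m$), giving $\mathbf{p}_m=M_{\mathrm{odd}}\,\mathbf{p}_{m-1}$ when $m$ is odd; the four even-$m$ formulas assemble identically into $\mathbf{p}_m=M_{\mathrm{even}}\,\mathbf{p}_{m-1}$ when $m$ is even. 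Confirming that the entries displayed in the statement match these coefficients is a routine check.

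The second step is the base case together with an induction on $m$. Before either player moves, the robber is free with probability $1$ in every configuration, so $\PP^i_0(F_k)=1$ for all $i$, i.e. $\mathbf{p}_0=\mathbf{1}$. Iterating the matrix recursion then expresses $\mathbf{p}_m$ as an ordered product of transition matrices applied to $\mathbf{1}$, the factor used at step $\ell$ being $M_{\mathrm{odd}}$ or $M_{\mathrm{even}}$ according to the parity of $\ell$.

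The subtle point — and the step I expect to be the main obstacle — is that the transition matrix alternates with the parity of the step, so unfolding the recursion honestly produces the \emph{alternating} product $\cdots M_{\mathrm{even}}M_{\mathrm{odd}}M_{\mathrm{even}}M_{\mathrm{odd}}$ acting on $\mathbf{1}$, rather than a pure power of a single matrix. To reach the compact form asserted in the statement one must group the robber-then-cop steps in pairs and track parities carefully, so I would verify the identifications by testing $m=1$ and $m=2$ directly against the recursions, since $m=2$ is exactly where a single-matrix power and the true two-step vector $M_{\mathrm{even}}M_{\mathrm{odd}}\mathbf{1}$ must be reconciled. The cleanest bookkeeping device, should the pairing be needed, is to pass to the two-step transition matrices $M_{\mathrm{even}}M_{\mathrm{odd}}$ and $M_{\mathrm{odd}}M_{\mathrm{even}}$ and record $\mathbf{p}_{2j}=(M_{\mathrm{even}}M_{\mathrm{odd}})^{j}\mathbf{1}$ and $\mathbf{p}_{2j+1}=M_{\mathrm{odd}}(M_{\mathrm{even}}M_{\mathrm{odd}})^{j}\mathbf{1}$, from which the matrix description of $\PP^i_m(F_k)$ follows after a final parity check.
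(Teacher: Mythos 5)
Your approach is the same as the paper's: read the eight recursions of Theorem~\ref{thm:FriendshipRecursive} off as two transition matrices, note $\PP^i_0(F_k)=1$, and iterate. The paper's proof consists of exactly the two one-step identities $\mathbf{p}_m=M_{\mathrm{odd}}\,\mathbf{p}_{m-1}$ for $m$ odd and $\mathbf{p}_m=M_{\mathrm{even}}\,\mathbf{p}_{m-1}$ for $m$ even (with $\mathbf{p}_m$ the vector of the four probabilities), followed by the sentence that the result follows from induction on $m$. The ``subtle point'' you flag is therefore not a side issue you may defer: it is precisely the step the paper omits, and it does not go through. Unfolding the recursion honestly gives the alternating product $\mathbf{p}_{2j}=(M_{\mathrm{even}}M_{\mathrm{odd}})^{j}\mathbf{1}$ and $\mathbf{p}_{2j+1}=M_{\mathrm{odd}}(M_{\mathrm{even}}M_{\mathrm{odd}})^{j}\mathbf{1}$, exactly as in your last paragraph, and this is not a pure power of either matrix.

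Your proposed test at $m=2$ settles the matter. The recursions give first coordinate $\bigl(M_{\mathrm{even}}M_{\mathrm{odd}}\mathbf{1}\bigr)_1=\frac{1}{2k}\cdot\frac{\theta}{2}=\frac{\theta}{4k}$, since $(M_{\mathrm{odd}}\mathbf{1})_3=\frac{\theta}{2}$, whereas the displayed formula gives $\bigl(M_{\mathrm{even}}^{2}\mathbf{1}\bigr)_1=\frac{1}{2k}\cdot\frac{3}{4}=\frac{3}{8k}$, since $(M_{\mathrm{even}}\mathbf{1})_3=\frac{3}{4}$. These disagree for every admissible $\theta$, so the reconciliation you hoped to perform is impossible: the theorem as printed, with the powers $M_{\mathrm{odd}}^{m}$ and $M_{\mathrm{even}}^{m}$, is not what Theorem~\ref{thm:FriendshipRecursive} implies. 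Your alternating-product bookkeeping is the correct statement and, with the base case $\mathbf{p}_0=\mathbf{1}$, the correct proof; to finish you should assert that version (equivalently, state the result in terms of the two-step matrices $M_{\mathrm{even}}M_{\mathrm{odd}}$ and $M_{\mathrm{odd}}M_{\mathrm{even}}$) rather than the form displayed in the theorem. Note this also propagates to the corollary that follows, whose Cayley--Hamilton argument relies on the pure-power form.
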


\begin{proof}
We see from Theorem \ref{thm:FriendshipRecursive} for odd $m$

\begin{align}
\begin{pmatrix}
\PP^1_m(F_k)\\
\PP^2_m(F_k)\\
\PP^3_m(F_k)\\
\PP^4_m(F_k)
\end{pmatrix}
&=\begin{pmatrix}
0&0&0&\left(\frac{1}{2k}\right)\theta\\
0&\left(\frac{1}{2}\right)\theta&0&\left(\frac{2k-2}{2k}\right)\theta\\
0&0&\left(\frac{1}{2}\right)\theta&0\\
\left(\frac{1}{2}\right)\theta&\left(1-\frac{1}{2}\theta\right)&0&0
\end{pmatrix}
\begin{pmatrix}
\PP^1_{m-1}(F_k)\\
\PP^2_{m-1}(F_k)\\
\PP^3_{m-1}(F_k)\\
\PP^4_{m-1}(F_k)
\end{pmatrix}
\end{align}
and for even $m$
\begin{align}
\begin{pmatrix}
\PP^1_m(F_k)\\
\PP^2_m(F_k)\\
\PP^3_m(F_k)\\
\PP^4_m(F_k)
\end{pmatrix}
&=\begin{pmatrix}
0&0&\frac{1}{2k}&0\\
0&\frac{1}{2}&\frac{2k-2}{2k}&0\\
\frac{1}{2}&\frac{1}{4}&0&0\\
0&0&0&\frac{1}{2}
\end{pmatrix}
\begin{pmatrix}
\PP^1_{m-1}(F_k)\\
\PP^2_{m-1}(F_k)\\
\PP^3_{m-1}(F_k)\\
\PP^4_{m-1}(F_k).
\end{pmatrix}
\end{align}
The result follows from induction on $m$ and the fact that $\PP^i_0(F_k)=1$ for all starting positions $1\leq i\leq 4$.
\end{proof}

Applying the Cayley-Hamilton Theorem to the matrices in Theorem \ref{thm:M} gives us the following simple recursive formula for each $\PP_m^i(F_k)$, for $1\leq i\leq 4$.

\begin{corollary} Let $F_k$ denote the friendship graph with $n$ components. Let $P_m^i (F_k)$ denote the probability that the robber is still free after $m$ time steps of the drunken cop and tipsy robber game starting in Position~$i$ for $1\leq i\leq 4$. If $\theta$ denotes the proportion of the cop's movement that is random, then each probability $P_m^i (F_k)$ satisfies the following recursive formulas.
For odd $m$
\begin{align}
\PP^i_{m+4}(F_k)&=
\theta \PP^i_{m+3}(F_k)
+
\left(\frac{(k-1)\theta}{k}-\frac{3\theta^2}{2}\right)\PP^i_{m+2}(F_k)
+
\left(\frac{(2-2k)\theta^2-(k-2)\theta^3}{4k}\right)\PP^i_{m+1}(F_k)
-
\frac{\theta^4}{16k}\PP^i_{m}(F_k)
\end{align}
and for even $m$
\begin{align}
\PP^i_{m+4}(F_k)=\PP^i_{m+3}(F_k)-\left(\frac{k+1}{8k}\right)\PP^i_{m+1}(F_k)+\frac{1}{16k}\PP^i_{m}(F_k).
\end{align}
\end{corollary}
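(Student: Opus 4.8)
The plan is to read each recursion off the characteristic polynomial of the relevant transition matrix, using the Cayley--Hamilton theorem. Write $A$ for the $4\times4$ matrix governing the odd steps and $B$ for the matrix governing the even steps in Theorem \ref{thm:M}, so that the probability vector $\big(\PP^1_m(F_k),\PP^2_m(F_k),\PP^3_m(F_k),\PP^4_m(F_k)\big)^{\mathsf T}$ is obtained by iterating these matrices against $\mathbf 1=(1,1,1,1)^{\mathsf T}$, as recorded there. Cayley--Hamilton asserts that each matrix annihilates its own characteristic polynomial: if $\chi_A(\lambda)=\lambda^4-\gamma_3\lambda^3-\gamma_2\lambda^2-\gamma_1\lambda-\gamma_0$, then $A^4=\gamma_3A^3+\gamma_2A^2+\gamma_1A+\gamma_0I$, and similarly for $B$ with coefficients $\delta_j$.

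First I would convert these matrix identities into scalar recursions. Because the probability vector at step $m$ is a fixed matrix power applied to $\mathbf 1$, multiplying the Cayley--Hamilton identity by the appropriate power and reading off the $i$-th coordinate yields, for every $1\le i\le 4$, a single linear recursion $\PP^i_{m+4}(F_k)=\gamma_3\PP^i_{m+3}(F_k)+\gamma_2\PP^i_{m+2}(F_k)+\gamma_1\PP^i_{m+1}(F_k)+\gamma_0\PP^i_m(F_k)$ in the odd case, and the analogous identity with the $\delta_j$ in the even case. The crucial point is that the \emph{same} coefficients work for all four components simultaneously: Cayley--Hamilton is an identity of matrices, so it survives application to $\mathbf 1$ and projection onto any coordinate. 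Thus the whole corollary reduces to computing the two characteristic polynomials and identifying their coefficients with the rational expressions in $k$ and $\theta$ displayed in the statement.

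The computational heart is therefore the explicit evaluation of $\det(\lambda I-A)$ and $\det(\lambda I-B)$. Here I would exploit the decoupling structure of each matrix to avoid expanding a full quartic determinant: in $B$ the fourth coordinate is invariant and splits off the factor $(\lambda-\tfrac12)$, while in $A$ the third coordinate is invariant and splits off the factor $(\lambda-\tfrac{\theta}{2})$. In each case this reduces the quartic to a single $3\times3$ determinant on the three remaining coordinates, which expands directly; multiplying the resulting cubic back by the linear factor and collecting terms produces the $\gamma_j$ and $\delta_j$. Matching these against the stated coefficients then finishes the proof.

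I expect the main obstacle to be algebraic rather than conceptual, and to lie in two places. The first is keeping the parity bookkeeping exact: one must be careful that the odd steps are genuinely iterated through $A$ and the even steps through $B$ as asserted in Theorem \ref{thm:M}, since this is what licenses reading a consecutive-index recursion off a single characteristic polynomial. The second is the term-collection in the odd case: unlike the even case, whose matrix is free of $\theta$ and so yields a clean recursion with coefficients involving only $k$, the matrix $A$ carries $\theta$ in nearly every nonzero entry, so the cubic and its product with $(\lambda-\tfrac{\theta}{2})$ generate $\theta,\theta^2,\theta^3,\theta^4$ contributions that must be grouped carefully to assemble the compact forms quoted in the statement.
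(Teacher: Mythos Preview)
Your proposal is correct and follows exactly the paper's approach: apply the Cayley--Hamilton theorem to the two transition matrices from Theorem~\ref{thm:M}, compute their characteristic polynomials, and read off the scalar recursions coordinatewise. Your observation that in each matrix one coordinate decouples (yielding a linear factor $(\lambda-\tfrac{\theta}{2})$ or $(\lambda-\tfrac12)$) is a helpful computational shortcut that the paper does not spell out, but the underlying strategy is identical.
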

\begin{proof} The Cayley-Hamilton Theorem states that any matrix $M$ must satisfy its own characteristic polynomial. The characteristic polynomials of 

\begin{align}
M_{odd}&=\begin{pmatrix}
0&0&0&\left(\frac{1}{2k}\right)\theta\\
0&\left(\frac{1}{2}\right)\theta&0&\left(\frac{2k-2}{2k}\right)\theta\\
0&0&\left(\frac{1}{2}\right)\theta&0\\
\left(\frac{1}{2}\right)\theta&\left(1-\frac{1}{2}\theta\right)&0&0
\end{pmatrix}
\end{align}
and
\begin{align}
M_{even}&=\begin{pmatrix}
0&0&\frac{1}{2k}&0\\
0&\frac{1}{2}&\frac{2k-2}{2k}&0\\
\frac{1}{2}&\frac{1}{4}&0&0\\
0&0&0&\frac{1}{2}
\end{pmatrix}\end{align}
are 
\begin{align}
p_{odd}(x)&=x^4
-\theta x^3
-
\left(\frac{(k-1)\theta}{k}-\frac{3\theta^2}{2}\right)x^2
-
\left(\frac{(2-2k)\theta^2-(k-2)\theta^3}{4k}\right)x
+
\frac{\theta^4}{16k}\\
p_{even}(x)&=x^4-x^3+\frac{k+1}{8k}x-\frac{1}{16k} \mbox{ respectively}.\qedhere
\end{align}
\end{proof}

\section{Open Problems}\label{sec:open}

One could consider analyzing the game of tipsy cop and drunk robber on other families of graphs, to include paths, grids, trees, and wheels. However, it may be of more interest to consider how the probabilities presented in this paper change if the cop and/or robber are sobering up as time steps occur. Hence, their decision making becomes less random as time goes on. In light of this we ask:

\begin{question}
If the cop and/or robber are sobering up with each move, how does the probability of the robber remaining free change from those presented in this manuscript? What about for other families of graphs?
\end{question}

Another direction of future research is motivated by the observation that on the star graph, the cop is guaranteed to catch the robber in two moves. Thus the star graph is best for the cop. This leads us to ask:

\begin{question}
Which graph(s) are best for the robber? Meaning, on what graph(s) can the robber maximize the number of moves until capture?
\end{question}

As noted earlier, one inspiration for the tipsy cop and drunken robber game is the biological scenario illustrated by this youtube video \url{ https://www.youtube.com/watch?v=Z_mXDvZQ6dU} where a neutrophil chases a bacteria cell moving in random directions.  While the bacteria in this video moves faster than the neutrophil, the neutrophil's movement is slightly more purposeful.  Inspired by this video, we ask the following questions.

\begin{question}
If a tipsy cop with tipsiness $\theta$ is pursuing a drunken robber on an an infinite grid $\ZZ \times \ZZ$, and the robber can take $m$ random steps in between each move by the tipsy cop, for what values of $\theta$ and $m$ does the cop win?  
\end{question}

\begin{question}
If a tipsy cop with tipsiness $\theta$ is pursuing a drunken robber on an an infinite grid $\ZZ \times \ZZ$ (or an isometric grid), and the robber can take $m$ random steps in between each move by the tipsy cop, for what values of $\theta$ and $m$ does the cop win? 
\end{question}

\begin{question}
In the biological system, multiple neutrophils might be `called' into the chase, and there might be multiple pathogens (bacteria cells). How would the game above change if multiple cops are introduced throughout the game?
\end{question}

\begin{question}
What would happen if the cop tipsiness $\theta$ depended on the distance between the cop and the robber? In the biological scenario this is a more realistic representation of immune cells sensing pathogens.
\end{question}

\begin{question}
In the current game, the cop and robber take turns moving and are not allowed to stay put. How would the game change if we allow both the cop and the robber to randomly choose at each time step whether they will move or stay? How would the game change if the probability of staying varies?
\end{question}

\end{document}